\newcommand{\executeiffilenewer}[3]{%
\ifnum\pdfstrcmp{\pdffilemoddate{#1}}%
{\pdffilemoddate{#2}}>0%
{\immediate\write18{#3}}\fi%
}
\newcommand{%
\executeiffilenewer{.svg}{.pdf}%
{inkscape -z -D --file=fig/.svg %
--export-pdf=fig/.pdf --export-latex}%
\input{.pdf_tex}%
}[1]{%
\executeiffilenewer{#1.svg}{#1.pdf}%
{inkscape -z -D --file=fig/#1.svg %
--export-pdf=fig/#1.pdf --export-latex}%
\input{#1.pdf_tex}%
}
\def\note#1{\GenericWarning{}%
  {AUTHOR WARNING: Unresolved annotation}%

}
\definecolor{blueblack}{rgb}{0,0,.7}
\newcommand{\emphdef}[1]{%
  \textcolor{blueblack}{%
    \textbf{\emph{#1}}%
  }%
}
\DeclareMathOperator{\Ima}{Im}
\newcommand{\R}{\mathbb{R}}
\newcommand{\NP}{\textbf{NP}}
 \newtheorem{theorem}{Theorem}
 \newtheorem{lemma}[theorem]{Lemma}
 \newtheorem{proposition}[theorem]{Proposition}
\title{Finding non-orientable surfaces in $3$-manifolds\thanks{The second author has received funding from the People Programme
(Marie Curie Actions) of the European Union's Seventh Framework Programme
(FP7/2007-2013) under REA grant agreement n° [291734].
The first author is supported by the Australian Research Council
(project DP140104246).}}%
\author{Benjamin A. Burton\thanks{School of Mathematics and Physics, The University of Queensland, Brisbane, Australia, \protect\url{bab@maths.uq.edu.au}}
\and Arnaud de Mesmay\thanks{CNRS, Gipsa-Lab, Grenoble, France,
\protect\url{arnaud.de-mesmay@gipsa-lab.fr}}
\and Uli Wagner\thanks{IST Austria, Klosterneuburg, Austria, \protect\url{uli@ist.ac.at}}
}
\begin{document}

\maketitle 

\begin{abstract}
  We investigate the complexity of finding an embedded non-orientable surface of
  Euler genus $g$ in a triangulated $3$-manifold. This problem occurs
  both as a natural question in low-dimensional topology, and as a
  first non-trivial instance of embeddability of complexes into
  $3$-manifolds.

  We prove that the problem is NP-hard, thus adding to the relatively
  few hardness results that are currently known in 3-manifold
  topology. In addition, we show that the problem lies in NP when the
  Euler genus g is odd, and we give an explicit algorithm in this
  case.
\end{abstract}

\section{Introduction}

Since the foundational work of Haken~\cite{h-tnik-61} on unknot
recognition, the past decades have witnessed a flurry of algorithms
designed to solve decision problems in low-dimensional topology. Many
of these results rely on the framework of \textit{normal surfaces},
which provide a compact and algebraic way to analyze and enumerate the
noteworthy surfaces embedded in a 3--manifold. In a nutshell, many
low-dimensional problems can be seen as an instance of the following
(intentionally vague) question, which encompasses the class of problems
that normal surface theory has been designed to solve:

\begin{framed}
  \textsc{Generic 3-manifold problem}
  
  \textbf{Input:} A $3$-manifold $M$.
  
  \textbf{Question:} Find an ``interesting'' surface in $M$.
\end{framed}

For example, for unknot recognition~\cite{hlp-ccklp-99}, one
triangulates the complement of the knot and looks for a spanning disk
that the knot bounds, while for knot genus~\cite{aht-cckgs-06}, one
looks for a Seifert surface of minimal genus instead. To solve
$3$-sphere recognition~\cite{r-ar3s-95,t-tprps-94}, one looks for a
maximal collection of stable and unstable spheres~\cite{h-wans-12}.
Prime decomposition~\cite{k-gfdm-29} and JSJ
decomposition~\cite{js-sfs3m-79,j-he3mb-79} work by finding embedded
spheres or tori in a $3$-manifold -- note that these decompositions are
the first steps to test homeomorphism of
$3$-manifolds~\cite{k-ah3mcg-15}, which is often considered a holy grail
of computational $3$-manifold theory. Other examples include the computation of Heegard genus (and Heegard splittings)~\cite{l-hsmlwc-07,l-adhga3-11}, determining whether a manifold is Haken~\cite{jo-ad3mhm-84} or the crosscap number of a knot~\cite{bo-ccnkip-12}.

In this work, we investigate one of the most natural instances of this generic problem: since every $3$-manifold contains every orientable surface, these (at least without further restrictions) can be considered uninteresting, and therefore the first non-trivial question is the following:

\begin{framed}
  \textsc{Non Orientable Surface Embeddability}

\textbf{Input}: An integer $g$ and a triangulation of a closed $3$-manifold $M$.

\textbf{Question}: Does the non-orientable surface of Euler genus $g$ embed into $M$?
\end{framed}

This question is not just a toy problem for computational
$3$-manifold theory: non-orientable surfaces embedded in a
$3$-manifold provide structural informations about it. Following the
foundational article of Bredon and Wood~\cite{bw-noso3m-69}
classifying non-orientable surfaces in lens spaces and surfaces
bundles, many works have been devoted to this study for specific
$3$-manifolds or specific surfaces (see for
example~\cite{e-nos3m-92,ih-nofsls-09,k-s3makb-78,lrs-nshc-15,r-issfs-96,r-3mffgc-79,r-nsnh3m-82}). Our
work complements these by investigating the complexity of finding
non-orientable surfaces in the most general setting.

Another motivation for studying this question comes from the higher
dimensional analogues of graph embeddings. Graphs generalize naturally
to simplicial complexes, and several recent efforts have been made to
study higher dimensional versions of the classical notions of planar
or surface-embedded graphs~\cite{mtw-hescr-11, mstw-e3sd-14,
  w-mreh-11}, see also Skopenkov~\cite{s-ekmes-07} for some mathematical background. In particular, Matou\v{s}ek, Sedgwick,
Tancer and Wagner~\cite{mstw-e3sd-14} recently showed that testing
whether a given $2$-complex embeds in $\mathbb{R}^3$ is
decidable -- the main algorithmic machinery
  underlying this result is yet another instance of the generic
  $3$-manifold problem! In their paper, they ask what is the
complexity of this problem for embeddings into other $3$-manifolds (as
opposed to $\R^3$), and since a
non-orientable surface is a particular simple instance of a $2$-complex, \textsc{Non Orientable Surface Embeddability} is the first
problem to investigate in this direction.

\subparagraph*{Our results.} Our first result is a proof of hardness.

\begin{restatable}{theorem}{Tarnaud}\label{T:arnaud}
The problem \textsc{Non Orientable Surface Embeddability} is \NP-hard.
\end{restatable}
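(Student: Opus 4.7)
The plan is to reduce from a classical \NP-hard satisfiability problem, with \textsc{One-in-Three 3-SAT} being a natural candidate because exact-Euler-genus constraints line up well with exact-one-satisfied clauses (standard 3-SAT might also work after additional bookkeeping). Given an input formula $\phi$ on $n$ variables and $m$ clauses, I would construct in polynomial time a triangulated closed $3$-manifold $M_{\phi}$ and an integer $g_{\phi}$, with the property that $\phi$ is satisfiable if and only if the non-orientable surface of Euler genus $g_{\phi}$ embeds into $M_{\phi}$.

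The construction would proceed by assembling $M_{\phi}$ from local \emph{gadgets} glued along tori. For each variable $x_i$ I would use a \emph{variable gadget} $V_i$, built from standard building blocks such as lens spaces or twisted $I$-bundles over $\mathbb{RP}^2$, whose internal topology forces any embedded non-orientable sub-surface of minimal Euler genus to choose between exactly two configurations, one labelled ``true'' and one ``false'', each contributing a fixed, known amount to the total Euler genus. For each clause $C_j$ I would use a \emph{clause gadget} $W_j$ with three distinguished boundary components, one per literal, that admits an embedded sub-surface of controlled genus only when the literals it receives from the attached variable gadgets are compatible with the clause being satisfied. The full manifold $M_\phi$ is then obtained by gluing variable and clause gadgets along these distinguished boundary pieces via tubes matching the incidence structure of $\phi$, and capping off whatever remains in a canonical way.

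The target genus $g_{\phi}$ is set to the Euler genus achieved by assembling the intended sub-surface across all gadgets under a consistent satisfying assignment. The forward direction is then essentially by construction: a satisfying assignment yields an embedded non-orientable surface of Euler genus exactly $g_{\phi}$. The converse direction requires showing that \emph{every} embedded non-orientable surface of Euler genus $g_{\phi}$ in $M_{\phi}$ can be isotoped into a standard form whose intersection pattern with each gadget matches one of the intended configurations, from which a satisfying assignment can be read off.

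The main obstacle, as typical for such reductions, is this converse direction: ruling out stray surfaces that meet the target Euler genus without arising from any satisfying assignment. The natural toolkit is normal surface theory combined with incompressibility of the connecting tubes and control of the $\mathbb{Z}/2$-homology class carried by the surface, ensuring that any candidate minimal-genus non-orientable surface is forced into the combinatorial shape dictated by the gadgets. A secondary technical concern, given the companion \NP-membership result stated only for odd Euler genus, is to arrange the construction so that $g_{\phi}$ is odd (or at least of controllable parity), so that the hardness reduction dovetails cleanly with \NP-membership to yield \NP-completeness in that regime.
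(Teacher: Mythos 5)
You have the right source problem (\textsc{One-in-Three SAT}), the right intuition that the non-orientability should be carried by a single $\mathbb{RP}^2$-like piece and that the target genus should be kept odd, and you correctly identify the converse direction as the crux. But the proposal stops exactly where the proof has to start: no concrete gadgets are constructed, and the toolkit you propose for the converse would not work as stated. You cannot hope to isotope an arbitrary embedded non-orientable surface of Euler genus $g_\phi$ into a ``standard form'' respecting a gadget-by-gadget decomposition: the problem asks only for some embedding of the genus-$g_\phi$ surface, such embeddings need not be incompressible or minimal in any sense (one can always add trivial handles), and normal surface theory gives you normalized representatives with respect to a triangulation, not representatives adapted to your chosen torus decomposition; moreover its normalization theorems require irreducibility hypotheses you have not arranged. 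Ruling out ``stray'' surfaces by geometric position is precisely the step that is hard, and asserting that incompressibility plus $\mathbb{Z}/2$-homology ``forces'' the combinatorial shape is a claim, not an argument.

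The paper's route is genuinely different and avoids this entirely. It builds a $2$-complex $K$ (a projective plane with one boundary circle per variable and per clause, with genus-one pieces $F_{u_i}$, $F_{\bar u_i}$ attached according to the incidence structure), then thickens and doubles to get $M$, so that there is a projection $\pi\colon M\to K$. Given an embedding $h\colon S\hookrightarrow M$ of the genus-$(2n+2m+1)$ surface, one studies the composite $f=\pi\circ h\colon S\to K$, which is only a continuous map, and reads off the assignment from the mod-$2$ topological degree of $f$ on each $2$-dimensional piece of $K$ -- no isotopy or normalization of $h(S)$ is ever needed. The technical heart, absent from your sketch, is showing that $f$ has degree $1$ on the projective-plane piece: this uses the fact that an odd-genus non-orientable surface embedded in a $3$-manifold carries a curve of order $2$ in $H_1(M)$, that the kernel of $\pi_\#\colon H_1(M)\to H_1(K)$ is torsion-free (a Mayer--Vietoris argument exploiting the doubled structure), and a cup-product computation in $\mathbb{Z}_2$-cohomology; then additivity of degrees around the branching circles, together with the lemma that a degree-one map between closed surfaces cannot increase genus, eliminates the ``three true literals per clause'' possibility. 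Without these ingredients (or a worked-out substitute with explicit gadgets and a proof of rigidity), your proposal is a plan rather than a proof.
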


As an immediate corollary, it is thus \NP-hard to decide,
given a $2$-complex $K$ and a $3$-manifold $M$, whether $K$ embeds
into $M$\footnote{On the other hand this problem is not even known to
  be decidable. This places it in the same complexity limbo as testing
  embeddability of $2$-complexes into
  $\mathbb{R}^4$~\cite{mtw-hescr-11}.}. This might not come as
surprise: this is a higher-dimensional version of \textsc{Graph
  Genus}, which is already known to be
\NP-hard~\cite{t-ggpnc-89}. However, we would like to emphasize that
non-orientable surfaces are among the simplest possible instances of
$2$-complexes, namely $2$-manifolds, and by contrast deciding whether
a $1$-manifold, i.e., a circle graph, embeds on a surface is
trivial. Furthermore, hardness results are well known to be elusive in
$3$-manifold topology, where iconic problems such as unknot
recognition and 3-sphere recognition lie in \NP $\cap$
co-\NP~\cite{hk-nrcr3-12,hlp-ccklp-99,l-ecktn-16,s-srlnp-11}\footnote{Note
  that the proof of co-\NP \ membership for 3-sphere
  recognition~\cite{hk-nrcr3-12} assumes the Generalized Riemann
  Hypothesis.}, and nothing is known for most other problems, the
notable exception being \textsc{3-Manifold Knot
  Genus}~\cite{aht-cckgs-06} which is known to be
\NP-complete\footnote{In parallel to this work, new \NP-hardness
  results have appeared very recently, for \textsc{Heegaard
    genus}~\cite{bds-chgnph-16} and for the \textsc{Sublink} problem
  and the \textsc{Upper bound for the Thurston complexity of an
    unoriented classical link.}~\cite{l-chpl3m-16}}.  Our result can
be seen as a hint that many three-dimensional problems are hard when
the description of a $3$-manifold is part of the input.

The proof of Theorem~\ref{T:arnaud} starts similarly to the
aforementioned one for \textsc{3-Manifold Knot Genus} by Agol, Hass
and Thurston: the idea is to encode an instance of
\textsc{One-in-Three SAT} within the embeddability of a non-orientable
surface inside a $2$-complex. This complex is then turned into a
$3$-manifold by a \textit{thickening} step and a \textit{doubling}
step. A key argument in the proof of the reduction of Agol, Hass and
Thurston revolves around computing a \textit{topological degree},
which is trivial in the case of knot genus. It turns out that this
computation still works but is significantly harder in our setting,
and this is the main technical hurdle in our case, for which we need
to introduce (co-)homological ingredients.

\smallskip

Our second result provides an algorithm for this problem, provided that $g$ is odd, proving that it is also in \NP.

\begin{restatable}{theorem}{Tnormal}\label{T:normal}
Let $g$ be an odd positive integer and $M$ a triangulation of a
3-manifold. The problem \textsc{Odd Non Orientable Surface Embeddability} of testing whether $M$ contains a non-orientable surface of Euler genus $g$ is in \NP.
\end{restatable}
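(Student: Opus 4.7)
My plan is to give an \NP\ certificate consisting of the normal coordinate vector $\vec{v} \in \Z_{\geq 0}^{7t}$ of a single normal surface $F_0 \subset M$ (where $t$ is the number of tetrahedra of $M$), together with an explicit tubing construction that recovers from $F_0$ a non-orientable surface of Euler genus exactly $g$. The key observation that makes this work for odd $g$ is that a closed surface with odd Euler characteristic is automatically non-orientable: orientable closed surfaces satisfy $\chi = 2-2h$, always even. So non-orientability can be certified by a parity check on $\chi$.

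The verifier checks in polynomial time that (i) $\vec{v}$ satisfies the matching equations and the quadrilateral compatibility conditions, so it defines an embedded normal surface $F_0$; (ii) $F_0$ is connected (known to be polynomial-time testable from $\vec{v}$ by standard normal-surface techniques, even when the coordinates are exponentially large in magnitude); (iii) $\chi(F_0)$, computable directly from triangle and quadrilateral counts, is odd; and (iv) $\chi(F_0) \geq 2-g$. Conditions (iii) and (iv) together certify that $F_0$ is a connected non-orientable normal surface of odd Euler genus $g_0 := 2 - \chi(F_0) \leq g$. From such $F_0$ I reconstruct a non-orientable surface of Euler genus exactly $g$ by tubing: attach $(g-g_0)/2$ disjoint $1$-handles to $F_0$, each obtained by picking two small disjoint disks on $F_0$, joining their centres by an embedded arc in $M \setminus F_0$, and thickening. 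Each handle decreases $\chi$ by $2$, so the resulting connected non-orientable surface has $\chi = 2-g$, hence Euler genus $g$. The required arcs exist because a non-orientable surface in an orientable $3$-manifold is automatically one-sided and so has connected complement; a minor variant handles the non-orientable-$M$ case. Crucially, $(g - g_0)/2$ is a non-negative integer precisely because $g$ and $g_0$ are both odd---this is exactly where the oddness hypothesis is used.

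The heart of the argument is to show that such a certificate of polynomial bit-length always exists when $M$ admits a non-orientable surface of Euler genus $g$. Let $g_0^{\ast}$ denote the minimum odd Euler genus over non-orientable surfaces embedded in $M$; since $g$ itself is attained, $g_0^{\ast} \leq g$. Any minimizer $N^{\ast}$ must be essentially incompressible: a non-trivial compression along a two-sided essential disk preserves the parity of $\chi$ while increasing $\chi$ by $2$, yielding a non-orientable component of strictly smaller odd Euler genus and contradicting minimality (modulo trivial compressions that only split off a sphere, which can be discarded after preprocessing $M$ into a $0$-efficient triangulation). By Haken's theorem, $N^{\ast}$ is isotopic to a normal surface of the same topological type; and in a $0$-efficient triangulation (obtained via Jaco--Rubinstein, working prime summand by prime summand), Jaco--Tollefson-style structure theorems let one take this normal representative to be a vertex or fundamental normal surface. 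The Hass--Lagarias--Pippenger bit-complexity bounds then supply the required polynomial bit-length.

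The main obstacle is this last step: ensuring that a minimum-odd-Euler-genus incompressible one-sided surface admits a normal representative with polynomially bounded coordinate vector. This requires careful adaptation of the Jaco--Tollefson machinery to one-sided non-orientable surfaces, together with additional care when $M$ is reducible or itself non-orientable (where one decomposes into prime summands or passes to the orientation double cover). A subsidiary technicality---polynomial-time verification of connectedness of $F_0$ directly from $\vec{v}$ despite potentially exponential-magnitude coordinates---is known to be feasible but also relies on standard normal-surface machinery rather than a naive enumeration of cells.
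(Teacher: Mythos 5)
Your overall strategy is the same as the paper's (reduce to the \emph{minimal} odd Euler genus, use the parity of $\chi$ to certify non-orientability, show the minimizer is incompressible by the compression-parity argument, normalize it, represent it by a fundamental normal surface with Hass--Lagarias--Pippenger bit bounds, and recover genus $g$ by adding orientable handles). The problem is that the step you yourself flag as ``the main obstacle'' is precisely the mathematical core of the theorem, and you leave it unproved. No bespoke ``adaptation of Jaco--Tollefson machinery to one-sided surfaces'' is actually needed: the paper's Proposition~\ref{P:fundamental} settles it with a short parity argument. Take a normal representative $S'$ of the minimal odd genus $g$ with minimal edge degree; if $S'$ is not fundamental, write it as a Haken sum of fundamental surfaces. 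By Theorem~\ref{T:matveev2} the summands are incompressible and none is a sphere or projective plane, so each has $\chi \le 0$; since $\chi$ is additive on normal coordinates and $\chi(S')$ is odd, some summand has odd genus at most $g$, hence by minimality of $g$ exactly $g$ --- but it has strictly smaller edge degree than $S'$, contradicting the choice of $S'$. Without this (or an equivalent) argument, the existence half of the \NP{} certificate --- that a polynomially-bounded witness exists whenever the answer is yes --- is simply missing from your proposal.

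The second genuine gap is the reducible case. Your certificate is a single coordinate vector in the \emph{given} triangulation of $M$, but both normalization (Theorem~\ref{T:matveev1}) and the Haken-sum theorem require irreducibility, and neither $0$-efficiency nor prime decomposition is known to be computable in polynomial time, so the verifier cannot silently ``preprocess $M$'': the decomposition data must itself be part of the certificate. The paper handles this with Burton's crushing procedure (Theorem~\ref{T:burton}), whose certificate --- the list of fundamental surfaces along which to crush --- lets the verifier rebuild the summand triangulations in polynomial time or detect a two-sided projective plane (in which case every odd genus embeds and we are done); Lemma~\ref{L:connecsum} then reduces the problem to a single summand, and an $H_1$ computation (via Lemma~\ref{L:hempel}) disposes of the $S^2 \times S^1$ and $S^2 \tilde{\times} S^1$ summands. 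Your suggestion of passing to the orientation double cover when $M$ is non-orientable is both unnecessary (the crushing and normal-surface machinery works for non-orientable $M$ directly) and not obviously sound, since a surface embedded in the double cover need not descend to $M$. Two smaller points: the tubing to raise the genus from $g_0$ to $g$ can be done entirely inside a small neighborhood of $F_0$, so no connectedness-of-complement discussion is needed; and the polynomial-time connectedness test from exponentially large coordinates is a real ingredient that deserves a concrete reference (Agol--Hass--Thurston-type orbit counting) rather than an appeal to ``standard techniques'' --- though, to be fair, the paper itself is terse on that verification step.
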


Observing that in the reduction involved in the proof of
Theorem~\ref{T:arnaud}, the non-orientable surface that we use has odd
Euler genus, we immediately obtain as a corollary that \textsc{Odd Non
  Orientable Surface Embeddability} is \NP-complete.

As is the case with many problems in low-dimensional topology, proving
membership in \NP \, is not as trivial as most computer scientists might
be accustomed to. As an illustration, our techniques fail for even
values of $g$, and in these cases the problem is not even known to be
decidable. A particularity of our proof is to leverage on the recent
simplifications due to Burton~\cite{b-nac3mt-14} of the \textit{crushing} procedure of Jaco and
Rubinstein~\cite{jr-0et3m-03} to reduce the problem to the case of an
irreducible $3$-manifold. Then our proof relies on normal surface
theory.

\section{Preliminaries}\label{S:prelim}

We only recall here the definitions of the basic objects which we
investigate in this article. The technical tools used in the proofs
will be introduced when needed, and in general we will assume that the
reader is familiar with the basic concepts of algebraic topology, as
explained for example in Hatcher~\cite{h-at-02}.

A \emphdef{surface} (resp.\ a \emphdef{surface with boundary}) is a
topological space which is locally homeomorphic to the plane
(resp.\ locally homeomorphic to the plane or the half-plane). By the
theorem of classification of surfaces, these are classified up to
homeomorphism by their \emphdef{orientability} and their
\emphdef{genus} (and the number of boundaries if there are any). Since
we will deal frequently with non-orientable surfaces, when we use the word
genus we actually mean \emphdef{Euler genus}, sometimes also called
\emphdef{non-orientable genus}, which equals twice the usual genus for
orientable surfaces. In particular, any surface with odd genus is
non-orientable.

A \emphdef{$3$-manifold} (resp.\ a \emphdef{3-manifold with boundary})
is a topological space which is locally homeomorphic to
$\mathbb{R}^3$, resp.\ to $\mathbb{R}^3$ or the half-space $\mathbb{R}^3_{|x \geq 0}$. To
be consistent with the literature in low-dimensional topology, we will
describe $3$-manifolds not with simplicial complexes, but with the
looser concept of (generalized) $\emphdef{triangulations}$, which are
defined as a collection of $n$ abstract tetrahedra, all of whose $4n$
faces are glued together in pairs. In particular, we allow two faces
of the same tetrahedron to be identified. Note that the underlying
topological space may not be a $3$-manifold, but if each vertex of the
tetrahedra has a neighborhood homeomorphic to $\mathbb{R}^3$ and no
edge is identified to itself in the reverse direction, we obtain a
$3$-manifold~\cite{m-as3mtth-52}.

A \emphdef{simplicial complex} $K$ is a set of simplices such that any face from a simplex in $K$ is also in $K$, and the intersection of two simplices $s_1$ and $s_2$ of $K$ is either empty or a face of both $s_1$ and $s_2$. In this article, we will only deal with $2$-dimensional simplicial complexes, which are simplicial complexes where the maximal dimension of the simplices is $2$ -- these can be safely thought of as triangles glued together along their subfaces.

\section{Hardness result}\label{S:arnaud}

In this section we prove the following theorem.

\Tarnaud*

Our reduction is inspired by the proof of Agol, Hass and
Thurston~\cite{aht-cckgs-06} that \textsc{Knot Genus in 3-manifolds}
is \NP-hard. While the idea of the reduction is similar, the proof of
its correctness is considerably more tricky. We use a reduction from
the \NP-complete~\cite{s-csp-78} problem \textsc{One-in-Three SAT},
which we first recall. It is defined in terms of literals (boolean
variables or their negations) gathered in clauses consisting of three
literals.

\begin{framed}
\textsc{One-in-Three SAT}

\textbf{Input}: A set of variables $U$ and a set of clauses $C$ over $U$ such that each clause contains exactly $3$ literals.

\textbf{Question}: Does there exist a truth assignment for $U$ such that each clause in $C$ has exactly one true literal? 
\end{framed}

Starting from an
instance $I$ of \textsc{One-in-Three SAT}, we will build a non-orientable
surface $S$ and a $3$-manifold $M$ such that $S$ embeds into $M$ if
and only if $I$ is satisfiable.

\subsection{The gadget}\label{S:gadget}

Let $I$ be an instance of \textsc{One-in-Three SAT}, consisting of a
set $U=\{u_1, \ldots , u_n\}$ of variables and a set $C=\{c_1, \ldots,
c_m\}$ of clauses. The surface $S$ is taken to be the non-orientable
surface of Euler genus $2m+2n+1$. The construction of $M$ is more
intricate, and follows somewhat the construction of the $3$-manifold
of Agol, Hass and Thurston, but with a M\"obius band glued on the
boundary. We build $M$ in three steps.

\begin{enumerate}
\item We first build a $2$-dimensional complex $K$.
\item We \textit{thicken} $K$ into a $3$-manifold $N$ with boundary.
\item We \textit{double} $N$, that is, we glue two copies of $N$ along
  their common boundary to obtain $M$.
\end{enumerate}

We first describe how these spaces are defined topologically, and
address in Lemma~\ref{L:complexity} the issue of computing an actual
triangulation of $M$.

  \textbf{First step.} The complex $K$ is obtained in the following
  way. We start with a projective plane $P$ with $n+m$ boundary
  curves, which we label by $u_1, \ldots, u_n, c_1, \ldots, c_m$. Let
  us denote by $k_i$ the number of times that the variable $u_i$
  appears in the collection of clauses $K$, and $\bar{k_i}$ the number
  of times that the negation of $u_i$ appears. Fix an
  orientation\footnote{Since $P$ is not orientable, this is of course
    not well-defined. We mean an orientation ``in the northern
    hemisphere'' of $P$ in Figure~\ref{F:BranchSurf}. Up to
    homeomorphism, it does not change anything, but this will be
    useful for the surgery arguments used throughout the proof.} of
  the boundary curves as in Figure~\ref{F:BranchSurf}. When gluing
  surfaces along curves, we will always use orientation-reversing
  homeomorphisms.

For $i=1, \ldots, n$, let $F_{u_i}$ and $F_{\bar{u_i}}$ be genus one
surfaces with $k_{i}+1$ and $\bar{k_i}+1$ boundaries. For each $i$,
one boundary curve from the surface $F_{u_i}$ is identified to
$u_i$. The remaining $k_i$ boundary components are identified with
each of the curves $c_j$ such that $u_i$ appears in $c_j$. Similarly,
$F_{\bar{u_i}}$ is attached to $\bar{u_i}$ and to every curve $c_j$
for which $\bar{u_i}$ appears in $c_j$. In the end, three surfaces are
attached along each $u_i$ ($F_{u_i}$, $F_{\bar{u_i}}$ and $P$), and
four surfaces are attached along each $c_i$ ($P$ and the surfaces
corresponding to the three litterals in $c_i$). We call the curves
$u_1, \ldots, u_n, c_1, \ldots c_m$ the \emphdef{branching cycles} of
$K$, and we refer to Figure~\ref{F:BranchSurf} for an illustration.

\begin{figure}
\centering
\def\svgwidth{7cm}
\executeiffilenewer{BranchSurf.svg}{BranchSurf.pdf}%
{inkscape -z -D --file=fig/BranchSurf.svg %
--export-pdf=fig/BranchSurf.pdf --export-latex}%
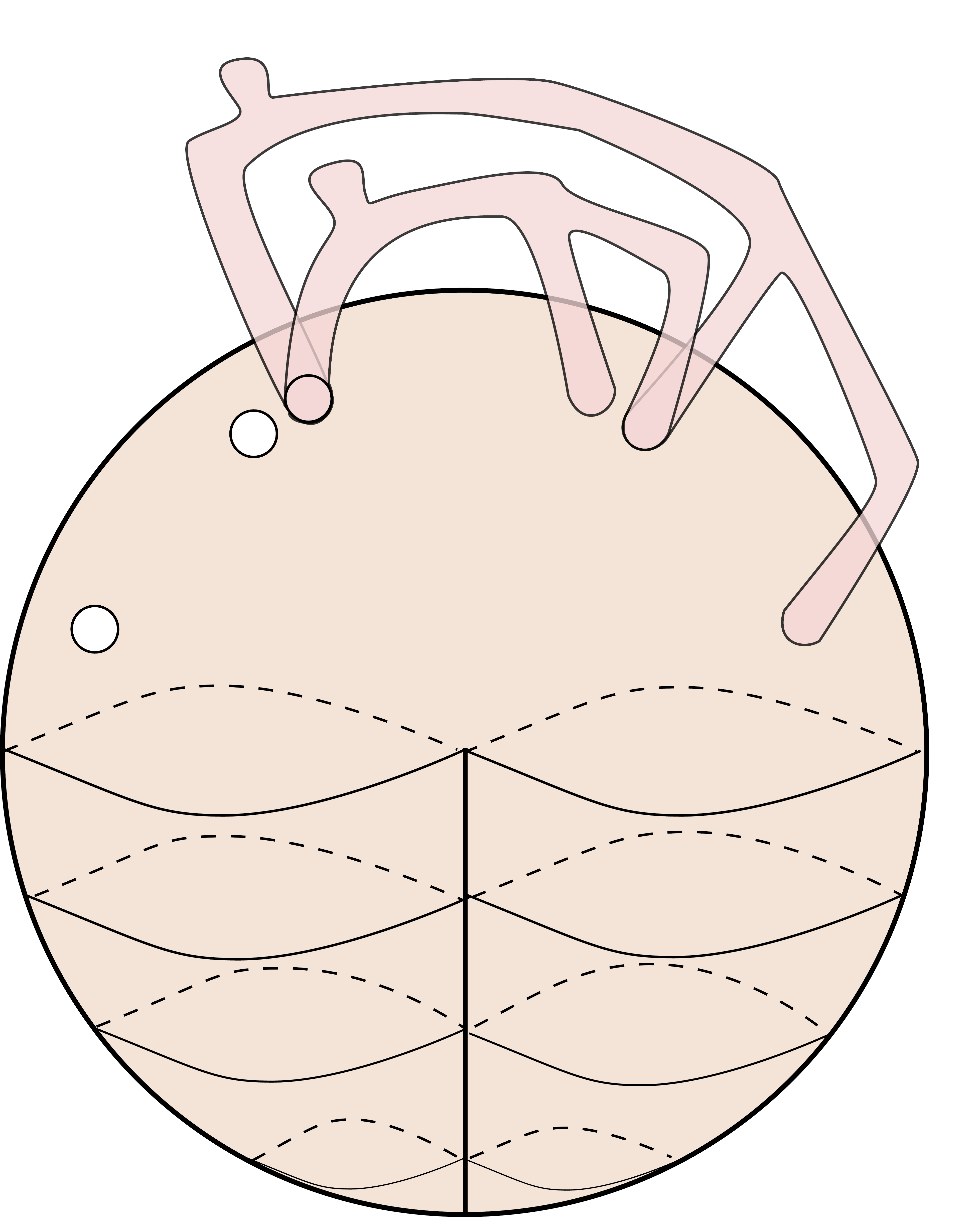%

\caption{The projective plane $P$ with its $n+m$ boundary curves, and
  examples of surfaces $F_{u_1}$ and $F_{\bar{u_1}}$ glued to clauses
  containing $u_1$, respectively $\bar{u_1}$.}
\label{F:BranchSurf}
\end{figure}

\textbf{Second step.} A $3$-manifold $M$ is a \textit{thickening} of a
2-dimensional complex $K$ if there exists an embedding $f:K \rightarrow
M$ such that $M$ is a regular neighborhood of $f(K)$. Intuitively, a
thickening corresponds to the idea of growing a
$3$-dimensional neighborhood around a $2$-complex, but some care is
needed, as not every 2-complex is thickenable -- see for example
Skopenkov~\cite{s-gntt2-95} for more details on this operation.

In our case though, the complex $K$ is always thickenable, and the
process is exactly the same as in the proof of Agol, Hass and
Thurston. When $K$ is locally a surface, the thickening just amounts
to taking a product with a small interval
(Figure~\ref{F:thicken}a.). Therefore, to define a thickening of $K$
it suffices to describe how to thicken around its singular points,
which by construction are the branching curves $u_1 \ldots u_n, c_1
\ldots c_m$. If $F_1, \ldots F_k$ are the surfaces adjacent to a
boundary curve, one can just pick a permutation of the surfaces around
the curve and thicken the complex following this permutation, as in
Figure~\ref{F:thicken}b.

This is akin to the fact that an embedding of a graph on a surface is
described by a permutation of the edges around each vertex. Applying
this construction for every boundary curve, we obtain a $3$-manifold
with boundary $N$ since every point close to the branching circles has
now a neighborhood locally homeomorphic to $\mathbb{R}^3$.

\begin{figure}
\centering
\def\svgwidth{11cm}
\executeiffilenewer{thicken1.svg}{thicken1.pdf}%
{inkscape -z -D --file=fig/thicken1.svg %
--export-pdf=fig/thicken1.pdf --export-latex}%
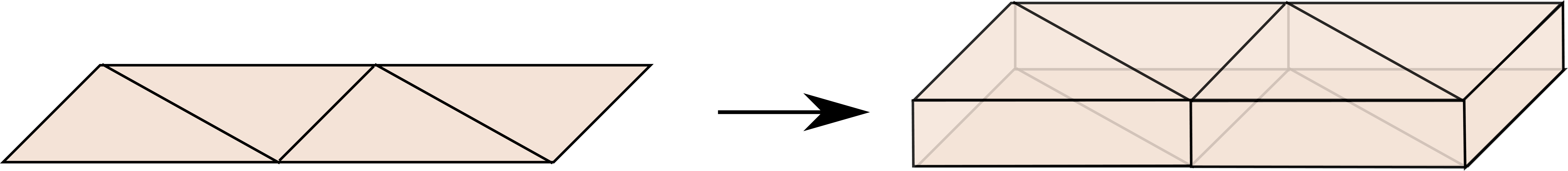%

\smallskip
a.
\smallskip

\def\svgwidth{11cm}
\executeiffilenewer{thicken3.svg}{thicken3.pdf}%
{inkscape -z -D --file=fig/thicken3.svg %
--export-pdf=fig/thicken3.pdf --export-latex}%
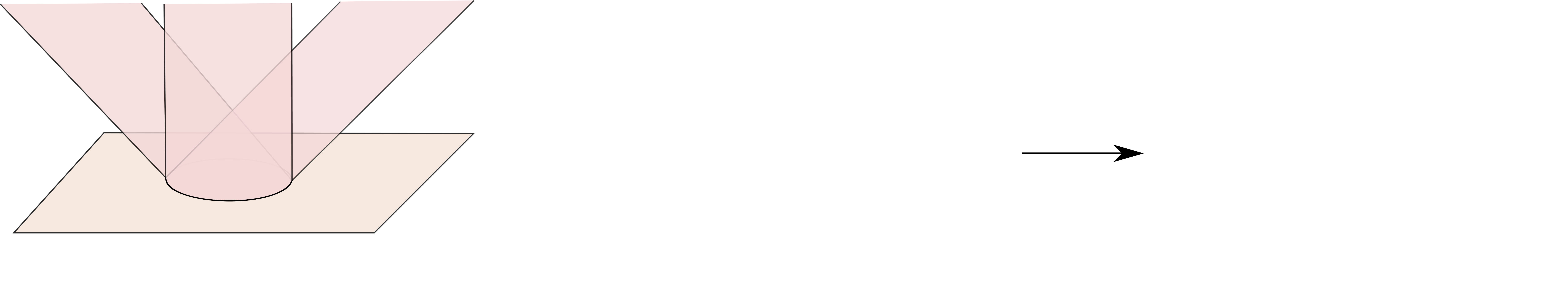%

\smallskip

\caption{a. The thickening of a surface. b.1. Four surfaces adjacent to a boundary curve 2. A sectional drawing of these and 3. A sectional drawing of their thickening.}
\label{F:thicken}
\end{figure}

\textbf{Third step.} In order to obtain a manifold without boundary, we \textit{double} $N$, that is, we consider the disjoint union of two copies $N_1$ and $N_2$ of $N$, and glue them along the boundary $\partial N_1=\partial N_2$ with the identity homeomorphism.

\smallskip

The following lemma shows that this construction can be computed in polynomial time.

\begin{lemma}\label{L:complexity}
A triangulation of the $3$-manifold $M$ can be computed in time
polynomial in $|I|=n+m$, the complexity of the initial \textsc{One-in-Three SAT} instance $I$.
\end{lemma}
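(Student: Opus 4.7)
The plan is to show that each of the three construction steps produces an object whose combinatorial complexity is polynomial in $|I| = n+m$, and can be carried out in polynomial time. First, I would produce explicit triangulations of the building blocks of $K$. The projective plane $P$ with $n+m$ boundary curves admits a triangulation with $O(n+m)$ triangles (for instance, start from a fundamental polygon of $P$ and puncture $n+m$ holes, each triangulated by a constant-size annulus). Each genus-one surface $F_{u_i}$ with $k_i+1$ boundary components admits a triangulation with $O(k_i+1)$ triangles, and similarly for $F_{\bar{u_i}}$. Since every clause contains exactly three literals, $\sum_i (k_i + \bar{k_i}) = 3m$, so the combined complexity of all these surfaces is $O(n+m)$. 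Before gluing, I refine the triangulations along the boundary curves $u_1,\dots,u_n,c_1,\dots,c_m$ so that the two copies of each such curve carry compatible triangulations, which is a local operation inflating the count by at most a constant factor. The gluings are then realized simplicially, yielding a triangulation of $K$ of size $O(n+m)$.

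Next, I would thicken $K$ into $N$ locally. Away from the branching curves, $K$ is locally a surface, and a product-with-interval thickening replaces each triangle by a constant number of tetrahedra (a standard prism-into-three-tetrahedra decomposition). Along each branching curve, I first fix a cyclic permutation of the incident surfaces (as described before the lemma) and then build a local thickening model realizing that permutation: this can be done using a constant number of tetrahedra per edge of the curve in the refined triangulation, exactly as in the proof of Agol--Hass--Thurston~\cite{aht-cckgs-06}. These local pieces are then assembled along their overlaps using the matching boundary triangulations from Step 1. The result is a triangulation of $N$ with $O(n+m)$ tetrahedra, and its boundary triangulation is determined by the construction.

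Finally, doubling $N$ is straightforward: I take two combinatorially identical copies $N_1$ and $N_2$ of the triangulation and identify their boundary triangulations by the identity map, giving a closed triangulated 3-manifold $M$ of size $O(n+m)$. Each of the three steps is clearly computable in polynomial time, so the overall procedure outputs a triangulation of $M$ in time polynomial in $n+m$.

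The main obstacle is the description of the local thickening along the branching curves: one must check carefully that the chosen cyclic model assembles into a genuine 3-manifold-with-boundary triangulation whose boundary is a triangulated surface suitable for doubling. However, this step is essentially local and entirely analogous to the one in Agol--Hass--Thurston, so once the matching of boundary triangulations in Step 1 is arranged, the rest reduces to standard bookkeeping.
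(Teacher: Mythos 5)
Your proposal is correct and follows essentially the same route as the paper: triangulate the building blocks of $K$ and glue them (linear size in $n+m$), thicken by replacing triangles with prisms plus a local model around each branching curve realizing the chosen permutation as in Agol--Hass--Thurston, and double by identifying two copies of $N$ along their boundary. The extra bookkeeping you supply (the bound $\sum_i (k_i+\bar{k_i})=3m$ and the compatibility of boundary triangulations before gluing) only makes explicit what the paper leaves implicit.
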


\begin{proof}[Proof of Lemma~\ref{L:complexity}]
We first observe that the simplicial complex $K$ can be computed in
time polynomial in $|I|$: one can simply start with a big enough triangulation of the projective plane, remove disjoint triangles for
the branching circles, and glue triangulations of the surfaces
$F_{u_i}$ and $F_{\bar{u_i}}$ along these holes. The complexity of
this construction is clearly linear in $|I|$.

The thickening step first involves replacing triangles of $K$ by triangular
prisms (see Figure~\ref{F:thicken}a.) and retriangulating them, which is done in linear time. Then,
for every boundary curve, computing a triangulation of the thickening
pictured in Figure~\ref{F:thicken}b. can be done in time linear in the number of adjacent surfaces, which is bounded by $|I|$.

Finally, the doubling just amounts to taking two triangulations of $N$ and gluing them along their boundary. The complexity of this step is linear, and this concludes the proof.
\end{proof}

Finally, let us fix some notation for the rest of the section. There
is a natural projection $p: N \rightarrow K$ which corresponds to a
deformation retraction of the thickening (since it is by definition a
regular neighborhood). We define the continuous map $\tau: M
\rightarrow N$ as being the identity on $N_1$ and sending every point
of $N_2$ to its counterpart in $N_1$, and $\pi=\tau \circ p$.

\subsection{Proof of the reduction: the easy direction}

To prove Theorem~\ref{T:arnaud}, there remains to show how to build an
embedding of $S$ into $M$ from a satisfying assignment for $I$ and
vice-versa. The first direction is straightforward.

\begin{proposition}\label{P:main-easy} If there is a truth assignment
  for $I$ such that each clause in $C$ has exactly one true litteral,
  then $S$, the non-orientable surface of Euler genus $2m+2n+1$, embeds in $M$.
\end{proposition}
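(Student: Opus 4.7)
The plan is to exhibit a closed subcomplex $K'\subseteq K$ that is already homeomorphic to $S$ and embedded in $M$, so no smoothing or perturbation is needed.

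Given a satisfying one-in-three assignment, for each variable $u_i$ let $\sigma_i\in\{u_i,\bar u_i\}$ denote the unique true literal, and set
\[
K' \;=\; P \;\cup\; \bigcup_{i=1}^{n} F_{\sigma_i} \;\subseteq\; K.
\]
The key structural observation is that at every branching cycle of $K$ exactly two sheets of $K'$ come together: along $u_i$ the three sheets $P,F_{u_i},F_{\bar u_i}$ of $K$ reduce to $P$ together with exactly one of $F_{u_i},F_{\bar u_i}$; along $c_j$ the four sheets of $K$ (namely $P$ and the three literal surfaces of $c_j$) reduce to $P$ together with the unique literal surface corresponding to the true literal in $c_j$, because the assignment is one-in-three. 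In a disc transversal to any branching cycle, $K'$ therefore appears as two half-lines joining into a single straight line through the centre, so $K'$ is locally a topological $2$-manifold there; away from branching cycles $K'$ is obviously a $2$-manifold. Consequently $K'$ is a closed $2$-manifold, and it is connected because every $F_{\sigma_i}$ meets $P$ along $u_i$.

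Next, a routine Euler-characteristic bookkeeping: since each shared branching circle contributes $\chi(S^1)=0$ to the union, $\chi(K')=\chi(P)+\sum_i\chi(F_{\sigma_i})$; substituting the Euler characteristics of $P$ and of the genus-one surfaces $F_{\sigma_i}$ and using the identity $\sum_i k_{\sigma_i}=m$ (exactly one true literal per clause) yields $\chi(K')=1-2n-2m$, that is, Euler genus $2n+2m+1$. Finally, $K'$ is non-orientable because a core circle $\mathbb{RP}^1\subseteq\mathbb{RP}^2$ in the projective plane $P$ can be chosen disjoint from the $n+m$ boundary circles of $P$, and its regular neighbourhood in $K'$ is then contained in $P$ and is a M\"obius band, providing a one-sided simple closed curve in $K'$. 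By the classification of closed surfaces, $K'\cong S$, and the inclusions $K'\hookrightarrow K\stackrel{f}{\hookrightarrow}N\hookrightarrow M$ yield the required embedding of $S$ in $M$.

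The only delicate point is verifying the local $2$-manifold structure of $K'$ at the branching cycles, which is precisely what allows us to avoid any smoothing step; the remaining steps are routine Euler-characteristic bookkeeping and an appeal to the classification of surfaces.
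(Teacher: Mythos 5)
Your proposal is correct and follows essentially the same route as the paper: the paper's proof also takes the union of $P$ with the surface $F_{u_i}$ or $F_{\bar{u_i}}$ selected by the truth assignment, notes that exactly two sheets meet along each branching circle so the union is a closed surface, and reads off non-orientability from $P$ and the Euler genus $2m+2n+1$ from the construction. Your version merely spells out the local manifold check and the Euler-characteristic bookkeeping that the paper leaves implicit.
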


\begin{proof}
If there is a truth assignment for $I$ such that each clause in $C$
has exactly one true litteral, we can embed $S$ in $K$, and therefore
in $M$, in the following way. Take the union of $P$ and for every $i$,
either $F_{u_i}$ if $u_i$ is true, or $F_{\bar{u_i}}$, if $u_i$ is
false. Then exactly two boundary components are identified along each
boundary component of $P$, so we obtain a surface $S'$. Since $S'$ contains
$P$, it is non-orientable, and by construction $S'$ has Euler
genus $2m+2n+1$. Thus we have found an embedding of $S$.
\end{proof}

\subsection{Proof of the reduction: the hard direction}

The other direction will occupy us for the rest of the section.

\begin{proposition}\label{P:main} 
If $S$ embeds in $M$, then there is a truth assignment for $I$ such
that each clause in $C$ has exactly one true litteral.
\end{proposition}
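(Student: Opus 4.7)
The plan is to use the projection $\pi = \tau \circ p : M \to K$ to extract, from any embedding $S \hookrightarrow M$, a non-negative integer sheet count $d_F$ for each $2$-face $F$ of $K$ (that is, for $F \in \{P, F_{u_1}, F_{\bar{u_1}}, \ldots, F_{u_n}, F_{\bar{u_n}}\}$), and then to show that the Euler genus condition $\chi(S) = 1 - 2m - 2n$ forces the data $(d_F)_F$ to encode a satisfying One-in-Three assignment; the variable $u_i$ will be set to true exactly when $d_{F_{u_i}} = 1$.

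First I would place $S$ in standard position with respect to $\pi$. Away from the branching cycles, $\pi^{-1}(\mathrm{int}(F))$ is homeomorphic to $\mathrm{int}(F) \times S^1$ (the projection $p : N \to K$ has interval fibres, and the doubling step glues two such intervals into a circle), so by a sequence of isotopies eliminating compressible bubbles and vertical disks---moves which do not decrease $\chi(S)$ and cannot destroy non-orientability---one can arrange $S \cap \pi^{-1}(F)$ to be a disjoint union of $d_F$ horizontal sheets, each projecting homeomorphically onto $F$. In this normal form, $S$ is reassembled from these sheets by gluings along circles at the branching cycles, and since such gluings preserve Euler characteristic, one obtains the identity
\[
\chi(S) \;=\; d_P\,\chi(P) + \sum_{i=1}^{n}\bigl(d_{F_{u_i}}\chi(F_{u_i}) + d_{F_{\bar{u_i}}}\chi(F_{\bar{u_i}})\bigr).
\]

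Substituting $\chi(P) = 1-(n+m)$, $\chi(F_{u_i}) = -(k_i+1)$, $\chi(F_{\bar{u_i}}) = -(\bar{k_i}+1)$ and $\chi(S) = 1-2n-2m$, and using $\sum_i(k_i+\bar{k_i}) = 3m$, the above identity together with the local matching conditions that sheets must satisfy at each branching curve (in order to piece together into a closed $2$-manifold) will force $d_P = 1$, $d_{F_{u_i}} + d_{F_{\bar{u_i}}} = 1$ for every $i$, and furthermore that at every clause curve $c_j$ exactly one of the three literal surfaces incident to $c_j$ has sheet count $1$. Defining $u_i$ to be true iff $d_{F_{u_i}} = 1$ then produces a One-in-Three satisfying assignment for $I$.

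The main obstacle, and the place where the argument departs substantively from the orientable knot-genus setting of Agol--Hass--Thurston, is in ruling out $d_F \geq 2$ and in establishing the precise matching at each branching curve. Because $S$ is non-orientable there is no integer degree of $\pi|_S$ available a priori to bound the sheet counts; the required global information instead comes from the mod-$2$ intersection pairing of $[S] \in H_2(M;\mathbb{Z}/2)$ against the fibre circles $\pi^{-1}(x)$, combined with a careful analysis of how a closed non-orientable surface must interact with the projective plane $P$ (which in particular will force $d_P$ to be odd, and hence $d_P = 1$ given the Euler characteristic budget). These are the (co)homological ingredients alluded to in the introduction, and they combine with the local combinatorics at each branching cycle to pin down the sheet data and to extract the desired One-in-Three assignment.
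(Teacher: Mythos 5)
There are two genuine gaps, and they sit exactly at the heart of the proposition. First, your ``standard position'' step is unjustified and, as stated, false: an embedded surface in a block $\pi^{-1}(\mathrm{int}(F))\cong \mathrm{int}(F)\times S^1$ need not be isotopic to a union of horizontal sheets. Even incompressible, boundary-incompressible pieces can be \emph{vertical} (essential annuli and tori over curves in $F$), and an arbitrary embedded $S$ need not be incompressible at all; the moves you invoke to fix this (compressions eliminating ``bubbles'') are not isotopies --- they change $\chi(S)$ and can in fact destroy non-orientability (compressing a Klein bottle along a non-separating two-sided curve yields a sphere), which breaks the exact Euler-characteristic identity your bookkeeping requires. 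The paper sidesteps all of this by never putting the embedded surface into a geometric normal form: it works with the \emph{mod-2 topological degree} of the composite map $f=\pi\circ h$ on each piece, defined homologically via $f_{\#}:H_2(S;\mathbb{Z}_2)\to H_2(K,B;\mathbb{Z}_2)$, for which only soft transversality remarks are needed. Second, the pivotal claim that $d_P$ is odd (hence $1$) is asserted, not proven. You defer it to ``the mod-2 intersection pairing of $[S]$ against the fibre circles'' plus ``a careful analysis,'' but nothing in the proposal explains why that pairing is nonzero over $P$ --- that is precisely where the odd genus and non-orientability of $S$ must be converted into homological information about $M$. In the paper this takes three nontrivial steps: an order-two curve $\alpha\subset S$ remains non-trivial in $H_1(M)$ (Lemma~\ref{L:hempel}), the kernel of $\pi_{\#}:H_1(M)\to H_1(K)$ is torsion-free via naturality of Mayer--Vietoris (Lemma~\ref{L:notorsion}), so $f(\alpha)$ is homologous to the order-two curve $\beta\subset P$, and then a cup-product/Kronecker-pairing computation ($\langle b\cup b, f_{\#}[S]\rangle=1$) identifies the degree of $f$ on $P$ as $1$. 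Without an argument of this kind your sheet data could simply avoid $P$ altogether (orientable pieces glued with flips can still assemble into a non-orientable surface), and the whole extraction of an assignment collapses.

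On the positive side, your final counting is sound in spirit and close to the paper's: granting degrees $d_F\in\{0,1\}$ with $d_P=1$ and the parity matching at branching circles, the identity $\sum_i\bigl(d_{F_{u_i}}(k_i+1)+d_{F_{\bar{u_i}}}(\bar{k_i}+1)\bigr)=n+m$ together with $d_{F_{u_i}}+d_{F_{\bar{u_i}}}\ge 1$ per variable and $\sum_{\ell\in c_j}d_{F_\ell}\ge 1$ per clause forces equality everywhere and hence exactly one true literal per clause; the paper packages the same tension differently, by gluing the degree-one pieces into a surface $S'$ of genus exceeding $2m+2n+1$ and ruling this out with the degree-one-map lemma (Lemma~\ref{L:deg1}). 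But that endgame is the easy part; the missing normal-form/degree foundation and the proof that the degree on $P$ is odd are the actual content of Proposition~\ref{P:main}.
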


\subparagraph{Outline of the proof.}
Proving this proposition is the main technical step of this section, and
it requires some tools from algebraic topology. Therefore, we first provide some intuition as to how the
proof goes.

\begin{wrapfigure}{r}{2.5cm}
\begin{center}
\begin{tikzcd}
S \arrow[hook]{r}{h} \arrow[swap]{dr}{f} & M \arrow{d}{\pi} \\
& K
\end{tikzcd}
\end{center}
\end{wrapfigure}

The natural idea would be to try to do the reverse of
Proposition~\ref{P:main-easy}, that is, starting from an embedding of
$S$ into $K$, to find the truth assignment by looking at which tube
the embedding chooses at every branching circle $u_i$. The difficulty
is that we do not start with an embedding into $K$, but only into
$M$. Composing this embedding $h$ with the map $\pi: M \rightarrow K$
leads to a continuous map $f:S \rightarrow K$, but $f$ has no reason
to be an embedding.

However, this approach can be salvaged. Following Agol, Hass and
Thurston~\cite{aht-cckgs-06}, we can still look at the
\emphdef{topological degree mod 2} induced by the continuous map $f$
at a point $x$ in $K$, which roughly counts the parity of how many
times $f$ maps $S$ to $x$. This number is constant where $K$ is a
surface, that is, outside of the branching circles $u_1, \ldots , u_n, c_1 \ldots , c_m$ of $K$, and the sum of the incoming degrees of the
patches of $K$ at a branching circle has to be $0$: intuitively, every
surface coming from one direction at a branching circle has to go
somewhere. Therefore, \textbf{if the degree of $f$ in $P$ is 1},
exactly one of the surfaces $F_{u_i}$ or $F_{\bar{u_i}}$ also has
degree 1. We can use it to define a truth assignment for the variable
in $U$, choosing $u_i$ to be true if $F_{u_i}$ has degree 1, and false
in the other case. Then, the sum of the degrees also has to be $0$ at
the circles corresponding to the clauses. Since $P$ has degree $1$,
this means that either one or three of the incoming surfaces also has
degree $1$. We show that this number is always one, otherwise the
surface $S$ can not have genus $2m+2n+1$. This will result from the
fact that if we have a degree 1 map between two surfaces $S_1$ and
$S_2$, then the genus of $S_2$ is not larger than the genus of $S_1$
(Lemma~\ref{L:deg1}). This shows that every clause has exactly one
true literal and concludes the proof.

This all hinges on the fact that the degree of $f$ in $P$ is one. This
is where our proof diverges from the one of Agol, Hass
and Thurston, as in their case this step is straightforward. Here,
this will result from the non-orientability of $S$: morally, when
embedding $S$ into $M$ and then mapping it into $K$, the only place
where the non-orientability can go is $P$. To prove this fact formally
is another matter and relies on three ingredients:

\begin{enumerate}
\item Since $S$ is non-orientable and has odd genus, in the image of the embedding $h(S) \subseteq M$, there is a non-trivial homology cycle $h(\alpha)$, which has order 2 in the $\mathbb{Z}$-homology of $M$ (Lemma~\ref{L:hempel}).
\item The kernel of the map $\pi_{\#}: H_1(M) \rightarrow H_1(K)$ has no torsion (Lemma~\ref{L:notorsion}). In particular, $\pi \circ h(\alpha)=f(\alpha)$ is non-trivial in the $\mathbb{Z}$-homology of $K$. Intuitively, the reason is that the $\mathbb{Z}_2$-subgroup of $H_1(M)$ comes from $P$, which is preserved by $\pi$. To prove this formally, we split $K$ and $M$ at the ``equator'' and exploit the naturality of the Mayer-Vietoris sequence (Lemma~\ref{L:notorsion}).
\item Using cup-products, which provide an algebraic bridge between (co-)homology in dimensions 1 and 2, we leverage on this to prove that $f$ has degree one on $P$.
\end{enumerate}

\smallskip

\subparagraph*{Introductory lemmas.} The notion of degree is conveniently expressed with the language of
homology. In the following, we will rely extensively on the following
notions: (relative) homology, Mayer-Vietoris sequence, cohomology,
Kronecker pairing (which we denote with brackets), cup-products, and
we will rely on Poincar\'e duality and the universal coefficient
theorem. Alas, introducing (or even defining) these falls widely
outside the scope of this paper, and we refer the reader to the
textbook of Hatcher~\cite{h-at-02} to get acquainted with these
concepts. For a map $f$, the induced maps in homology and cohomology
are respectively denoted by $f_{\#}$ and $f^{\#}$.

Let us first prove the three aforementioned lemmas. The first one
shows the non-triviality of maps from non-orientable surfaces of odd
genus to $3$-manifolds (see also Hempel~\cite[Lemma 5.1]{h-3m-04}). The
second one shows that the map $\pi$ only kills torsion-free elements
and the third one shows that degree 1 maps between surfaces can only
reduce the genus.  To streamline the notations, when no module is
indicated, homology and cohomology are taken with $\mathbb{Z}$
coefficients.

\begin{lemma}\label{L:hempel}
Let $S$ be a non-orientable surface of odd genus, and
$\alpha$ be a simple closed curve on $S$, inducing an element of order $2$ in $H_1(S)$. Let
$f:S\rightarrow M$ be an embedding of $S$ into a $3$-manifold
$M$. Then $f(\alpha)$ is not null-homologous in $H_1(M)$.
\end{lemma}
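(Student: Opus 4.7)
The plan is to argue by contradiction after passing to $\mathbb{Z}/2$ coefficients: I will establish the stronger statement that $f_{\#}[\alpha] \neq 0$ in $H_1(M;\mathbb{Z}/2)$, which forces non-triviality in $H_1(M;\mathbb{Z})$ by naturality of mod-$2$ reduction (any integral $2$-chain with boundary $f(\alpha)$ would reduce to a mod-$2$ chain with the same boundary). Working mod $2$ is convenient because Poincar\'e duality holds unconditionally for any closed manifold with $\mathbb{Z}/2$ coefficients; this will matter later because the doubled $3$-manifolds produced by the construction of Section~\ref{S:gadget} are typically non-orientable.

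First I would make the geometric meaning of $\alpha$ explicit. Since $H_1(S;\mathbb{Z}) \cong \mathbb{Z}^{g-1}\oplus \mathbb{Z}/2$ for a non-orientable surface of Euler genus $g$, the assumption that $[\alpha]$ has order $2$ forces $\alpha$ to be an orientation-reversing (one-sided) simple closed curve, whose tubular neighborhood in $S$ is a M\"obius band. Equivalently, $\langle w_1(S), [\alpha]\rangle = 1$ in $\mathbb{Z}/2$, and this pairing with $w_1$ is the invariant I plan to detect from inside $M$.

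The core step is to build a class $\omega \in H^1(M;\mathbb{Z}/2)$ that pairs non-trivially with $f_{\#}[\alpha]$. Intersection theory suggests the natural candidate $\omega := \mathrm{PD}_{\mathbb{Z}/2}(f_{\#}[S])$, the mod-$2$ Poincar\'e dual of the push-forward of the $\mathbb{Z}/2$-fundamental class of $S$. The key cohomological computation will be that the pullback $f^{\#}\omega$ is precisely the Euler class of the normal bundle $\nu_S$ of $S$ in $M$, which for a real line bundle coincides with $w_1$; combined with the Whitney sum formula applied to $f^{*}TM = TS\oplus \nu_S$, this gives $f^{\#}\omega = w_1(\nu_S) = w_1(S) + f^{\#}w_1(M)$ in $H^1(S;\mathbb{Z}/2)$. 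The Kronecker adjunction then produces
\[
\langle \omega,\, f_{\#}[\alpha]\rangle \;=\; \langle w_1(S),[\alpha]\rangle + \langle w_1(M),\, f_{\#}[\alpha]\rangle \;=\; 1 + \langle w_1(M),\, f_{\#}[\alpha]\rangle.
\]
If $f_{\#}[\alpha]$ vanished in $H_1(M;\mathbb{Z}/2)$, both the left-hand side and the correction term on the right would be zero, yielding the contradiction $0 = 1$.

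The main obstacle is precisely the possible non-orientability of $M$: a naive integral approach breaks down, and the bundle identity picks up the extra $f^{\#}w_1(M)$ term that could in principle cancel the leading $1$. The attractive feature of the mod-$2$ set-up is that this correction term is absorbed by the very hypothesis being contradicted, so the proof closes uniformly, whether or not $M$ is orientable.
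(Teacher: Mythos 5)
Your argument is correct, and it reaches the same two detecting classes as the paper but packages them differently. The paper's proof splits into cases according to whether $h(S)$ is two-sided or one-sided in $M$: in the two-sided case it observes that $f(\alpha)$ is orientation-reversing in $M$ (i.e.\ detected by $w_1(M)$), and in the one-sided case it detects $f(\alpha)$ by its mod~$2$ intersection number with $f(S)$ and Poincar\'e duality; both cases end with the universal coefficient theorem, as in your reduction to $\mathbb{Z}_2$ coefficients. Your version replaces the case analysis by the single identity
$\langle \mathrm{PD}(f_{\#}[S]),f_{\#}[\alpha]\rangle=\langle w_1(\nu_S),[\alpha]\rangle=\langle w_1(S),[\alpha]\rangle+\langle w_1(M),f_{\#}[\alpha]\rangle$,
using the self-intersection formula (restriction of the dual of $[S]$ is the top Stiefel--Whitney class of the normal bundle) together with the Whitney sum formula; the class $\mathrm{PD}(f_{\#}[S])$ plays the role of the paper's one-sided case and $w_1(M)$ that of its two-sided case, so the dichotomy is absorbed into one equation and the conclusion follows uniformly. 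What your route buys is exactly this uniformity (and it makes transparent why non-orientability of $M$ causes no trouble); what it costs is the invocation of slightly heavier machinery (Thom/self-intersection formula, characteristic classes) where the paper gets by with elementary transversality.

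One justification should be repaired: you claim that $[\alpha]$ having order $2$ already forces $\alpha$ to be one-sided, but this is false in general --- on the Klein bottle the fibre of the twisted circle bundle is a two-sided simple closed curve of order $2$ in integral homology. The correct reason, and the one the paper uses, is the odd genus hypothesis: the unique order-$2$ class is $a_1+\cdots+a_g$ in the standard crosscap basis, and by Wu's formula $\langle w_1(S),[\alpha]\rangle=[\alpha]\cdot[\alpha]=g\equiv 1 \pmod 2$. Since the lemma assumes $g$ odd, the input $\langle w_1(S),[\alpha]\rangle=1$ to your computation is indeed available, so this is a one-line fix rather than a structural gap --- but as written the step is misattributed, and it is precisely where the odd-genus hypothesis (which is essential; the lemma fails for the two-sided order-$2$ curve above) enters the proof.
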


\begin{proof}
We recall that a co-dimension $1$ submanifold $M_1$ embedded in a
manifold $M_2$ is two-sided if its normal bundle is trivial, otherwise
it is one-sided. An embedded curve is orientation-preserving if it has
an orientable neighborhood, otherwise it is orientation-reversing.

Since $S$ has odd Euler characteristic, $\alpha$ is
orientation-reversing on $S$. Now we distinguish two cases: either $S$ is $2$-sided in $M$, or it is
$1$-sided. In the first case, $f(\alpha)$ is orientation reversing in
$f(S)$, and therefore also in $M$. Therefore it is non-trivial in
$\mathbb{Z}_2$ homology. In the second case, a small generic
perturbation of $f(\alpha)$ makes it have a single intersection point
with $f(S)$. By Poincar\'e duality with $\mathbb{Z}_2$-coefficients, it
is therefore non-trivial in $H_1(M, \mathbb{Z}_2)$. In both cases the
result follows by the universal coefficient theorem.

\end{proof}

\begin{lemma}\label{L:notorsion}
Let $K, M$ and $\pi$ be as introduced in Section~\ref{S:gadget}, then the kernel of the map $\pi_{\#}: H_1(M) \rightarrow H_1(K)$ has no torsion.
\end{lemma}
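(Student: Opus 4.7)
The plan is to identify $\ker(\pi_{\#})$ with the relative homology group $H_1(N,\partial N)$ via naturality of the long exact sequences of pairs, and then to show this group is torsion-free using the concrete structure of $K$ at its branching circles.

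First, since $p$ is a deformation retraction, $p_{\#}$ is an isomorphism and hence $\ker(\pi_{\#}) = \ker(\tau_{\#})$, where $\tau\colon M \to N$ is the folding map (the identity on $N_1$ and the canonical identification on $N_2$). Since $\tau$ restricts to the identity on $\partial N$, it gives a map of pairs $(M,\partial N) \to (N,\partial N)$ and hence a commutative ladder of long exact sequences in homology. Moreover, $\partial N$ separates $M$ into $N_1$ and $N_2$, so excision (equivalently, $M/\partial N \simeq (N_1/\partial N)\vee(N_2/\partial N)$) yields $H_*(M,\partial N) \cong H_*(N,\partial N)\oplus H_*(N,\partial N)$, and under this decomposition the induced map $\tau_{\#}$ becomes the sum map $(a,b)\mapsto a+b$. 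A standard diagram chase now shows $\ker(\tau_{\#}) \cong H_1(N,\partial N)$: each $\alpha \in \ker(\tau_{\#})$ projects to an antidiagonal class $(a,-a)$ in relative homology, and each such class lifts back uniquely to $\ker(\tau_{\#})$, because the a~priori indeterminacy $i_{\#}(\ker j_{\#})$ in fact vanishes---both $\ker i_{\#}$ and $\ker j_{\#}$ equal the image of the connecting map from $H_2$ of the respective pair, and these images coincide under the excision decomposition of $H_2(M,\partial N)$.

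The lemma thus reduces, via the long exact sequence of $(N,\partial N)$, to proving that the torsion subgroup of $H_1(N)\cong H_1(K)$ is contained in $j_{\#}(H_1(\partial N))$: if so, then $H_1(N,\partial N)$ is an extension of a subgroup of the free group $H_0(\partial N)$ by the torsion-free quotient $H_1(N)/j_{\#}(H_1(\partial N))$, hence itself torsion-free.

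Verifying this torsion-lifting statement is the main remaining obstacle, and it is where the concrete structure of the construction enters. A Mayer--Vietoris decomposition of $K$ along its branching circles shows that the torsion of $H_1(K)$ is $\mathbb{Z}_2$-generated by differences $s_j - s_k$ of the cores of two distinct non-orientable sheets meeting at a common branching circle $e$, since the relation $2 s_j = [e] = 2 s_k$ in $H_1(K)$ forces such a difference to be $2$-torsion. On the other hand, each non-orientable sheet has two ``sides'' in $\partial N$, each a M\"obius-band-like subsurface whose core cycle in $\partial N$ projects under $j_{\#}$ to the core $s_j$ of the sheet in $H_1(K)$. Differences of such cores taken across distinct sheets at a common branching circle then provide elements of $H_1(\partial N)$ mapping to the required $s_j - s_k$, which yields the desired lifts and completes the proof.
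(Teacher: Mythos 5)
Your first step is fine: since $p_{\#}$ is an isomorphism and $\tau$ is a retraction of $M$ onto $N_1$, one does get $\Ker(\pi_{\#})\cong\Ker(\tau_{\#})\cong H_1(M,N_1)\cong H_1(N,\partial N)$ (this follows even more directly from the long exact sequence of the pair $(M,N_1)$ plus excision than from your chase through $(M,\partial N)$). The genuine gap is in the reduction that follows. From the exact sequence $H_1(\partial N)\xrightarrow{j_\#}H_1(N)\to H_1(N,\partial N)\to H_0(\partial N)$, what you need is that the cokernel $H_1(N)/j_{\#}(H_1(\partial N))$ is torsion-free, and this does \emph{not} follow from the statement you actually argue for, namely that the torsion subgroup of $H_1(N)\cong H_1(K)$ is contained in $\Ima(j_{\#})$: the cokernel can acquire torsion from infinite-order classes a multiple of which lies in $\Ima(j_{\#})$. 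A concrete counterexample to the implication you use is the orientable twisted $I$-bundle $N$ over the Klein bottle: $H_1(N)\cong\mathbb{Z}\oplus\mathbb{Z}_2$, the image of $H_1(\partial N)=H_1(T^2)$ is $2\mathbb{Z}\oplus\mathbb{Z}_2$, so all the torsion of $H_1(N)$ lifts to the boundary, and yet $H_1(N,\partial N)\cong\mathbb{Z}_2$. So to make your route work you must prove that no infinite-order class of $H_1(K)$ has a nonzero multiple in $j_{\#}(H_1(\partial N))$ without itself lying there, i.e.\ that $j_{\#}(H_1(\partial N))$ is saturated modulo torsion; given your identification, this is essentially the full content of the lemma, and it requires an actual computation with how the sheets and branching circles sit in $\partial N$, which your sketch does not attempt. (The paper handles this by a different decomposition: it splits $K$ and $M$ along the equator of $P$ and uses naturality of the Mayer--Vietoris sequence to see that the $\mathbb{Z}_2$ torsion of $H_1(M)$, which comes from the M\"obius-band-times-circle piece, maps onto the $\mathbb{Z}_2$ torsion of $H_1(K)$, so no torsion lies in the kernel.)

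A secondary problem is your description of the torsion of $H_1(K)$: in this complex the only non-orientable sheet is the punctured projective plane $P$; the pieces $F_{u_i},F_{\bar u_i}$ are orientable genus-one surfaces, so there are no ``two distinct non-orientable sheets meeting at a common branching circle'' and no relations $2s_j=[e]=2s_k$ of the kind you invoke. The order-two class is the core $\beta$ of the M\"obius part of $P$, with $2\beta$ homologous to the sum of all the branching circles. Its lift to $\partial N$ does exist (the two parallel copies of $P$ in $\partial N$ carry copies of $\beta$), but, as explained above, lifting the torsion alone is not the statement your reduction needs.
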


\begin{proof}
Let $K_1$ and $K_2$ denote the lower and the upper hemispheres of $K$ (see Figure~\ref{F:vietoris}), $M_1$ and $M_2$ be the corresponding subspaces of $M$. By naturality of the Mayer-Vietoris sequence with reduced homology, we obtain the following commutative diagram, where the horizontal lines are exact.

\begin{figure}
\centering
\def\svgwidth{14cm}
\executeiffilenewer{vietoris.svg}{vietoris.pdf}%
{inkscape -z -D --file=fig/vietoris.svg %
--export-pdf=fig/vietoris.pdf --export-latex}%
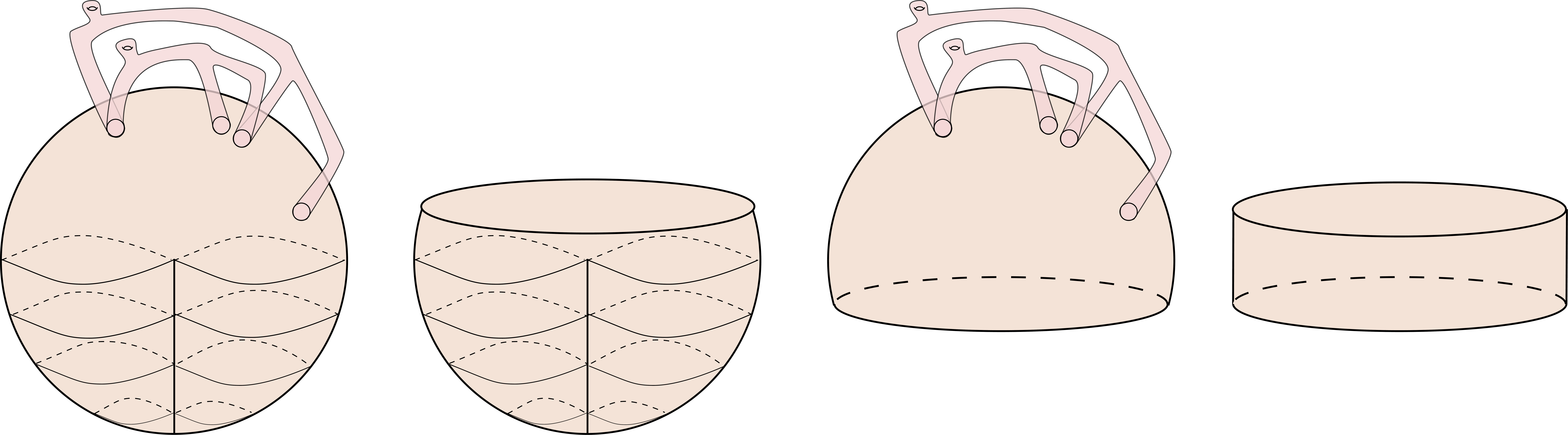%

\caption{Decomposing the complex $K$ around the equator.}
\label{F:vietoris}
\end{figure}

\begin{diagram}
 & \rTo & H_1(M_1 \cap M_2) &\rTo^{(i_{1},j_{1})} & H_1(M_1) \oplus H_1 (M_2) &\rTo^{k_1-l_1} & H_1(M)    &  \rTo^{\partial} & \widetilde{H_0}(M_1\cap M_2)&\rTo\\
   &    & \dTo_{\pi_{\#}}        &     &  \dTo_{(\pi_{1\#}, \pi_{2\#})}  &     & \dTo_{\pi_{\#}} & &  \dTo_{\pi_{\#}}\\
 & \rTo & H_1(K_1 \cap K_2) &\rTo^{(i_2,j_2)} & H_1(K_1) \oplus H_1(K_2) & \rTo^{k_2-l_2} & H_1(K) & \rTo^{\partial}&\widetilde{H_0}(K_1 \cap K_2)&\rTo
\end{diagram}

We first remark that when applied to a surface, the process of
thickening and doubling amounts to taking the product with
$S^1$. Therefore, we know that $K_2$ is a M\"obius band, $M_2$ is a
M\"obius band times a circle, $K_1 \cap K_2$ retracts to $S^1$ and
$M_1 \cap M_2$ retracts to a torus $T$. Since $M_1 \cap M_2$ and $K_1
\cap K_2$ are connected, their reduced 0-homologies are zero, thus the
maps $k_1-l_1$ and $k_2-l_2$ are surjective. Furthermore, $\pi_{2\#}$
is the projection of the $S^1$ fiber, $\Ima(i_1)=\mathbb{Z}^2$,
$\Ima(j_1)=\mathbb{Z} \oplus 2\mathbb{Z}$, $\Ima(i_2)=\mathbb{Z}$ and
$\Ima(j_2)=2 \mathbb{Z}$. Therefore, $k_1(H_1(M_1) \oplus H_1(M_2))$
contains no torsion, and the torsion subgroup of $H_1(M)$ comes from
$-l_1(H_1(M_2))$. Similarly, the torsion subgroup of $H_1(K)$ comes
from $-l_2(H_1(K_2))$. By commutativity of the diagram,
$(k_2-l_2)\circ (\pi_{\#1},\pi_{2\#})=\pi_{\#} \circ (k_1-l_1)$ and
thus their image contains the $\mathbb{Z}_2$ torsion subgroup of
$H_1(K)$. Therefore, the kernel of the map $\pi_{\#}$ has no torsion
and the claim is proved.

\end{proof}

\begin{lemma}\label{L:deg1}
Let $f:S_1 \rightarrow S_2$ a continuous map of degree one mod 2 between
two surfaces $S_1$ and $S_2$. Then the genus of $S_2$ is not larger than the genus of $S_1$.
\end{lemma}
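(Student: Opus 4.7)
The plan is to work entirely with $\mathbb{Z}/2$ coefficients, which sidesteps the orientability issue since every manifold is $\mathbb{Z}/2$-orientable, so that both $S_1$ and $S_2$ carry fundamental classes $[S_1], [S_2]$ in $H_2(\cdot\,;\mathbb{Z}/2)$, and the assumption translates into the clean equation $f_{\#}[S_1]=[S_2]$.

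The central step is to show that the induced map in cohomology
\[
f^{\#}: H^1(S_2;\mathbb{Z}/2) \longrightarrow H^1(S_1;\mathbb{Z}/2)
\]
is injective. I would argue this by contradiction: if some nonzero $\alpha \in H^1(S_2;\mathbb{Z}/2)$ satisfies $f^{\#}\alpha = 0$, then Poincar\'e duality for $S_2$ (with $\mathbb{Z}/2$ coefficients), in the form of nondegeneracy of the cup-product pairing $H^1(S_2;\mathbb{Z}/2)\times H^1(S_2;\mathbb{Z}/2)\to H^2(S_2;\mathbb{Z}/2)=\mathbb{Z}/2$, provides $\beta \in H^1(S_2;\mathbb{Z}/2)$ with $\langle \alpha\cup\beta, [S_2]\rangle = 1$. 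Naturality of the cup product together with the degree-one condition then gives
\[
1 = \langle \alpha\cup\beta, [S_2]\rangle = \langle \alpha\cup\beta, f_{\#}[S_1]\rangle = \langle f^{\#}(\alpha\cup\beta), [S_1]\rangle = \langle f^{\#}\alpha \cup f^{\#}\beta, [S_1]\rangle = 0,
\]
a contradiction. Hence $f^{\#}$ is injective and in particular $\dim_{\mathbb{Z}/2} H^1(S_2;\mathbb{Z}/2) \le \dim_{\mathbb{Z}/2} H^1(S_1;\mathbb{Z}/2)$.

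To conclude, I would invoke the classification of surfaces to identify this $\mathbb{Z}/2$-Betti number with the Euler genus: for an orientable closed surface of (orientable) genus $h$ we have $\dim H^1(\cdot;\mathbb{Z}/2) = 2h$, and for a non-orientable closed surface obtained as a connect sum of $g$ projective planes we have $\dim H^1(\cdot;\mathbb{Z}/2) = g$. In both cases this number equals the Euler genus as defined in Section~\ref{S:prelim}, so the inequality $\dim H^1(S_2;\mathbb{Z}/2)\le \dim H^1(S_1;\mathbb{Z}/2)$ is exactly the desired bound on Euler genera.

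The only subtlety I anticipate is the bookkeeping in the Kronecker/cup-product computation (making sure that $f_{\#}[S_1]=[S_2]$ really is equivalent to $\deg f \equiv 1 \pmod 2$, and that the pairing is nondegenerate for non-orientable surfaces — which is why we must use $\mathbb{Z}/2$ coefficients throughout rather than $\mathbb{Z}$). There is no serious obstacle beyond invoking Poincar\'e duality with $\mathbb{Z}/2$ coefficients and the universal coefficient theorem, both of which are standard tools already in use in the preceding lemmas.
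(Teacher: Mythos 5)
Your proposal is correct and follows essentially the same route as the paper: both arguments rest on the nondegeneracy of the $\mathbb{Z}/2$ cup-product pairing (Poincar\'e duality) together with naturality of the cup product and the Kronecker pairing applied to $f_{\#}[S_1]=[S_2]$, the only cosmetic difference being that you phrase the conclusion as injectivity of $f^{\#}$ on $H^1(\,\cdot\,;\mathbb{Z}/2)$ while the paper runs the identical computation as a direct contradiction from $g_2>g_1$. Your explicit check that the $\mathbb{Z}/2$-Betti number equals the Euler genus in both the orientable and non-orientable cases is a detail the paper uses implicitly, so no gap either way.
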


\begin{proof}[Proof of Lemma~\ref{L:deg1}]
Let us denote by $g_1$ the genus of $S_1$ and by $g_2$ the genus of
$S_2$. Assume by
contradiction that $g_2$ is larger than $g_1$. The map $f$ induces a
map on first cohomology groups
$f^{\#}:H^1(S_2,\mathbb{Z}_2)=\mathbb{Z}_2^{g_2} \rightarrow
H^1(S_1,\mathbb{Z}_2)=\mathbb{Z}_2^{g_1}$. Since $g_2$ is larger than
$g_1$, by dimension this map has a non-trivial kernel. Pick an element
$a$ in this kernel, and an element $b\in H^1(S_2,\mathbb{Z}_2)$ such
that $a \cup b$ generates $H^2(S_2,\mathbb{Z}_2)$ (which exists by
Poincar\'e duality). Then $f^{\#}(a \cup b)=0$ by naturality of the
cup product.  But then, by naturality of the Kronecker pairing, we
have

\[ 0=\langle f^{\#}(a \cup b), [S_1] \rangle = \langle a \cup b, f_{\#}([S_1]) \rangle = \langle a \cup b , [S_2]\rangle = 1, \]
and we have reached a contradiction.

\end{proof}

\subparagraph*{Wrapping up the proof.}

We can now proceed with the proof.

\begin{proof}[Proof of Proposition~\ref{P:main}]
Let us denote by $h$ the embedding from $S$ into $M$, and by $\alpha$
a simple cycle of order $2$ (in homology over $\mathbb{Z}$) in $S$. By
Lemma~\ref{L:hempel}, $h(\alpha)$ is not null-homologous in $M$, and
it has order $2$ in $H_1(M)$. By Lemma~\ref{L:notorsion}, $\pi_{\#}:H_1(M) \rightarrow H_1(K)$ does not have $h(\alpha)$ in its kernel, therefore we obtain that $\pi \circ h(\alpha)=f(\alpha)$ is not null-homologous in $K$, and since $\alpha$ has order $2$ in $H_1(S)$, it also has order $2$ in $H_1(K)$.

But there is a unique homology class of order $2$ in $H_1(K)$, which
is the one induced by the simple cycle $\beta$ which has order $2$ in
$P$. Therefore $h(\alpha)$ is homologous to $\beta$.

We now switch to $\mathbb{Z}_2$
coefficients, in order to use the 2-dimensional homology despite the non-orientability. Since $\mathbb{Z}_2$ is a field, homology and cohomology
with $\mathbb{Z}_2$ coefficients are dual to each other so we can take
a cohomology class $b$ in $H^1(K, \mathbb{Z}_2)$ which evaluates to
$1$ on $[\beta]$. The map $f^{\#} :
H^1(K,\mathbb{Z}_2) \rightarrow H^1(S,\mathbb{Z}_2)$ maps $b$ to a
cohomology class $a \in H^1(S,\mathbb{Z}_2)$, and by naturality of the
Kronecker pairing, we have \[\langle a,[\alpha] \rangle = \langle
f^{\#}(b),[\alpha]\rangle = \langle b,f_{\#}([\alpha]) \rangle=
\langle b,[\beta] \rangle=1,\] where the brackets denote taking the
representative in 1-dimensional homology with $\mathbb{Z}_2$
coefficients and the last equality follows from the definition of
$b$. Now, denote by $(\alpha, \beta_1, \gamma_1, \beta_2, \gamma_2,
\ldots \beta_k, \gamma_k)$ a family of simple curves forming a basis
of $H_1(S, \mathbb{Z}_2)$, such that each pair $(\beta_i,\gamma_i)$
intersects once and there are no other intersections. We have that $a$ is the Poincar\'e dual of $\alpha + \sum_I
\beta_i + \sum_J \gamma_j$, for some subsets $I,J \subseteq
     [k]$, and since $\langle a, [\alpha] \rangle=1$, a quick computation in the ring $H^*(S,\mathbb{Z}_2)$ shows that $a \cup a=\xi$,
     where $\xi$ is the generator of $H^2(S, \mathbb{Z}_2)$.

Now, by naturality of the cup-product, we obtain $f^{\#}(b \cup b)=f^{\#}(b) \cup f^{\#}(b)=a \cup a=\xi$. Furthermore,
once again by naturality of the Kronecker pairing, and writing $[S]$
for the fundamental class mod 2 of $S$, we have
\begin{equation} 1= \langle \xi, [S] \rangle= \langle f^{\#}(b \cup b), [S] \rangle
= \langle b\cup b,f_{\#}([S])\rangle. \label{E:main} \end{equation}

Let us open a parenthesis and recall how the notion of \emphdef{degree} of a continuous
map can be extended when the target is not a manifold, applied to our specific case. The map $f$ induces a mapping $f_{\#}$ in \textit{relative homology}
between $H_2(S,\emptyset,\mathbb{Z}_2)$ and $H_2(K,B,\mathbb{Z}_2)$,
where $B$ is the set of branching circles of $K$. The group
$H_2(K,B,\mathbb{Z}_2)$ is generated by the homology classes induced
by the \textit{pieces} $P, F_{u_i}$ and $F_{\bar{u_i}}$, and therefore
the image of $f_{\#}(S)$ associates to each piece a 0 or 1 number, the
\textit{topological degree mod 2} of $f$ on this piece. An equivalent
view of this number is the following. By standard transversality
arguments, the map $f:S\rightarrow K$ can be homotoped so as to be a
union of homeomorphisms of subsurfaces of $S$ into one of the pieces
$P, F_{u_i}, F_{\bar{u_i}}$ forming $K$. The parity of the number of
subsurfaces of $S$ mapped to a piece $P, F_{u_i}$ or $F_{\bar{u_i}}$
is also the topological degree mod 2 of the map $f$. This second point of view shows that the sum of the degrees of the pieces adjacent to a branching circle is $0$, as $S$ has no boundary.

Going back to the proof, we observe that, juggling between both interpretations of the degree, the geometric meaning of Equation~\eqref{E:main} is that $f(S)$ covers the
intersection point of two perturbated copies of $\beta$ an odd number
of times, and as this intersection point is in $P$, the topological degree mod 2 of $f$ on $P$ is $1$.

The sum of the incoming degrees of $2$-dimensional patches along a
boundary curve $u_i$ or $c_i$ in $K$ is $0$. Therefore, around every
boundary curve $u_i$, this allows us to pick a truth assignment for
$u_i$, depending on whether $f$ has degree $1$ on $F_{u_i}$ or
$F_{\bar{u_i}}$. This will conclude the proof if we prove that this
truth assignment $\varphi$ is valid for the $1$-in-$3$ SAT instance $|I|$.

For every clause $c_i$, there are exactly $4$ surfaces adjacent to the
boundary curve $c_i$, one of these being $P$, and we denote the others
by $F_1$, $F_2$ and $F_3$. Since $f$ has degree $1$
on $P$, it has degree $1$ either on one of the other surfaces or on
all three. If we are in the former case for every clause, this shows
that all the clauses are
satisfied exactly by one of its variables under the truth assignment
$\varphi$, and we are done. Otherwise, for every clause where $f$ has
degree one on all three surfaces $F_1$, $F_2$ and $F_3$, pick arbitrarily one, say $F_1$, and consider the surface $S'$
obtained by gluing every such $F_1$ to $P$ and every $F_2$ and $F_3$
together. We claim that this surface has genus strictly larger than
$2n+2m+1$:

\begin{itemize}
\item The projective plane $P$ contributes by $1$.
\item For every $i$, exactly one of the surfaces $F_{u_i}$ or $F_{\bar{u_i}}$ is chosen. Since they have (Euler) genus two, they contribute by $2$.
\item For every clause, the gluing of $F_1$ to $P$ increases the genus by $2$. We have already reached $2n+2m+1$.
\item Every time we glue $F_2$ and $F_3$ together, we increase the genus yet again.
\end{itemize}

But by definition of $S'$, there is a degree one map from $S$ to
$S'$, which is impossible by Lemma~\ref{L:deg1}. This concludes the proof.
\end{proof}

The combination of Lemma~\ref{L:complexity} and Proposition~\ref{P:main} provides
a polynomial reduction from 1-in-3 SAT to the problem of deciding the
embeddability of a non-orientable surface into a $3$-manifold, which
concludes the proof of Theorem~\ref{T:arnaud}.


\section{An algorithm to find non-orientable surfaces of odd Euler genus in 3-manifolds}\label{S:normal}

In this section we prove the following theorem.

\Tnormal*

We first observe that if a non-orientable surface $S$ of genus $g$ embeds in a $3$-manifold $M$, then all the non-orientable surfaces of genus $g+2k$ for $k > 1$ also embed into $M$, since one can add orientable handles in a small neighborhood of $S$. Therefore, to prove Theorem~\ref{T:normal} it is enough to find the non-orientable surface of \textbf{minimal} odd Euler genus which embeds into $M$, and this is what our algorithm will do.

Let us also note that if $M$ is non-orientable, it contains a solid Klein
bottle in the neighborhood of an orientation-reversing
curve. Therefore it also contains every non-orientable surface of even
genus, and the algorithm is trivial in this case. Thus, the only case not covered by our algorithm is the one of non-orientable surfaces of even Euler characteristic in orientable manifolds.

\subsection{Background on low-dimensional topology and normal surfaces}

We introduce here quickly the tools we are using from $3$-dimensional topology and normal surfaces, and refer to Hass, Lagarias and Pippenger~\cite{hlp-ccklp-99} or Matveev~\cite{m-atc3m-03} for more background.

A $3$-manifold $M$ is \emphdef{irreducible} if every sphere embedded
in $M$ bounds a ball in $M$.  The \emphdef{connected sum} $M_1 \# M_2$ of two
$3$-manifolds $M_1$ and $M_2$ is obtained by removing a small ball
from both $M_1$ and $M_2$ and gluing together the resulting boundary
spheres. A $3$-manifold is \emphdef{prime} if it can not be presented as a
connected sum of more than one manifold, none of which is a
sphere. It is well known~\cite[Proposition~1.4]{h-nb3mt-07} that prime
manifolds are irreducible, except for $S^2 \times S^1$ and the
non-orientable bundle $S^2 \tilde{\times} S^1$.

Let $S$ be a surface embedded in $M$. A \emphdef{compressing disk} for $S$ is an embedded disk $D \subset M$ whose interior is disjoint from $S$ and whose boundary is a non-contractible loop in $S$. A surface is \emphdef{compressible} if it has a compressing disk and \emphdef{incompressible} if not. If a surface $S$ is compressible, one can cut it along the boundary of a compressing disk and glue disks on the resulting boundaries, this reduces its genus by $2$.

To introduce normal surfaces, we denote by $T$ a triangulation of a 3--manifold~$M$. A \emphdef{normal
  isotopy} is an ambient isotopy of $M$ that is fixed on the 2--skeleton
of $T$. A \emphdef{normal surface} in~$T$ is a properly embedded surface
in~$T$ that meets each tetrahedron in a (possibly empty) disjoint
collection of \emphdef{normal disks}, each of which is either a
\emphdef{triangle} (separating one vertex of the tetrahedron from the other
three) or a \emphdef{quadrilateral} (separating two vertices from the other
two).  In each tetrahedron, there are $4$ possible types of triangles and
$3$ possible types of quadrilaterals, pictured in Figure~\ref{F:Normaldisks}.  

\begin{figure}[htb]
\centering
\def\svgwidth{10cm}
\executeiffilenewer{Normaldisks.svg}{Normaldisks.pdf}%
{inkscape -z -D --file=fig/Normaldisks.svg %
--export-pdf=fig/Normaldisks.pdf --export-latex}%
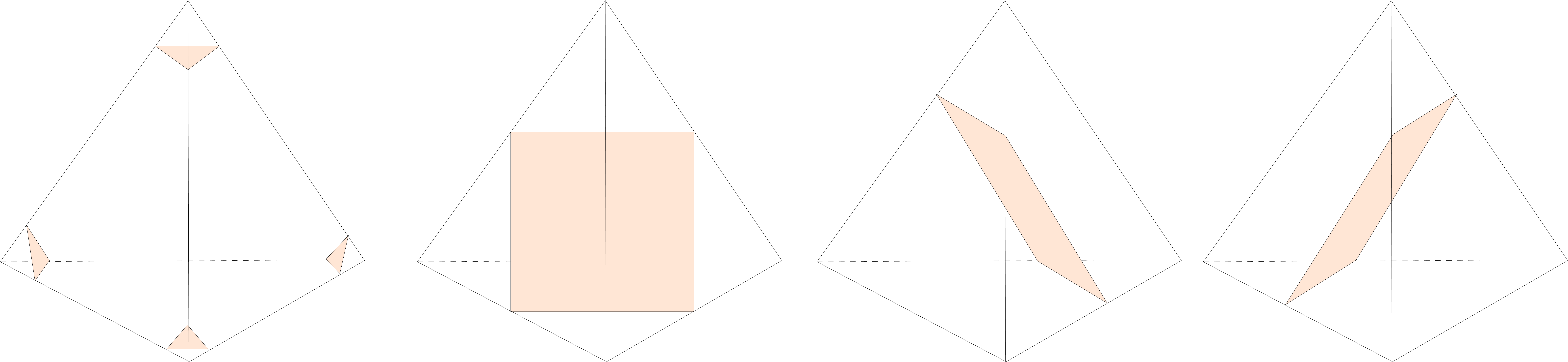%

\caption{The seven types of normal disks within a given tetrahedron: Four
  triangles and three quadrilaterals.}
\label{F:Normaldisks}
\end{figure}

Normal surfaces are used to investigate combinatorially and computationally the surfaces embedded in a $3$-manifold. In this endeavour, the first step is to prove that the surfaces we are interested in can be \emphdef{normalized}, that is, represented by normal surfaces. The following theorem is due to Haken~\cite[Chapter 5]{h-tnik-61}, we refer to the book of Matveev for a proof.

\begin{theorem}[{\cite[Corollary~3.3.25]{m-atc3m-03}}]\label{T:matveev1}
Let $M$ be an irreducible $3$-manifold and $S$ be an incompressible surface embedded in $M$. Then, if $S$ is not a sphere, it is ambient isotopic to a normal surface.
\end{theorem}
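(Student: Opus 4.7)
The plan is to execute the classical straightening procedure of Haken: convert $S$ into a normal surface via a finite sequence of ambient isotopies, each strictly decreasing a non-negative integer complexity. A convenient choice of complexity is the lexicographic pair $(w(S), c(S))$, where $w(S)=|S\cap T^{(1)}|$ is the weight of $S$ and $c(S)$ counts the closed components of $S\cap T^{(2)}$. Since this order is well-founded, any valid reduction sequence terminates, and when no further reduction is possible the resulting surface will be normal. The argument proceeds in three reduction phases preceded by a general-position step.

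The setup: a small ambient isotopy puts $S$ in general position with respect to $T$, so that $S$ avoids the $0$-skeleton, meets the $1$-skeleton transversely in finitely many points, and meets each $2$-simplex $F$ in a finite disjoint union of simple arcs and simple closed curves. The first reduction removes closed intersection curves. If some face $F$ contains a closed component $\gamma$ of $S\cap F$, take $\gamma$ innermost in $F$; it bounds a disk $D_F\subset F$ with interior disjoint from $S$. Since $S$ is incompressible and not a sphere, $\gamma$ also bounds a disk $D_S\subset S$, and the $2$-sphere $D_F\cup D_S$ bounds a ball $B\subset M$ by irreducibility. Isotoping $D_S$ across $B$ past $D_F$ strictly decreases $c(S)$ without increasing $w(S)$.

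The second and main reduction removes \emph{bad arcs}, meaning arcs $\alpha\subset S\cap F$ both of whose endpoints lie on the same edge $e$ of $F$. Choosing $\alpha$ outermost in $F$ yields a disk $\Delta_F\subset F$ bounded by $\alpha$ together with a sub-arc of $e$, with interior disjoint from $S$. The key step is to assemble $\Delta_F$ with a suitable sub-disk of $S$ lying in the adjacent tetrahedron into a sphere, fill it with a ball using irreducibility, and use this ball to isotope $S$ across $e$, strictly reducing $w(S)$. This is the main technical hurdle: producing the required sub-disk of $S$ relies on incompressibility of $S$ applied inside the tetrahedral ball, and one must verify that the resulting surgery does not covertly reintroduce bad arcs or closed intersection curves of higher lexicographic priority. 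The hypothesis $S\neq S^2$ is essential here, as it is what lets us convert null-homotopic loops on $S$ into actual disks bounded in $S$.

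Once no bad arcs remain and no closed intersection curves survive in the $2$-skeleton, a further innermost-sphere argument inside each tetrahedron (again invoking incompressibility of $S$ and irreducibility of $M$ via the ball structure of a tetrahedron) shows that $S\cap\Delta$ is a disjoint union of disks for every tetrahedron $\Delta$ of $T$. Each such disk has polygonal boundary on $\partial\Delta$ whose vertices lie on pairwise distinct $1$-edges of $\Delta$, and an elementary combinatorial case analysis shows that any such polygon must be a triangle or quadrilateral in the sense of normal surface theory. A final normal isotopy straightens each piece to a standard normal disk, yielding the desired normal representative of the isotopy class of $S$.
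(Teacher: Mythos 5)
The paper does not prove this statement at all: it is imported as a known theorem of Haken, with the proof deferred to Matveev \cite[Corollary~3.3.25]{m-atc3m-03}, so there is no internal argument to compare yours against. Judged on its own, your sketch follows the right classical normalization scheme (repeatedly lower a complexity by ambient isotopies, using incompressibility of $S$ and irreducibility of $M$ to trade disks), but it contains genuine gaps at exactly the delicate points.

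The most serious gap is the final combinatorial claim. Eliminating returning arcs (arcs of $S\cap F$ with both endpoints on one edge) only guarantees that the boundary of each disk component of $S\cap\Delta$ is a \emph{normal} curve on $\partial\Delta$; it does not force its intersection points to lie on pairwise distinct edges. Normal curves of length $8$ and longer exist (the octagonal curves meet two opposite edges twice each), and excluding disk components bounded by such curves is a substantive part of Haken's proof, where minimality of the weight is invoked again via a further weight-reducing isotopy; it is not an ``elementary combinatorial case analysis.'' Second, your account of where the hypotheses enter is off. The returning-arc move is a plain isotopy across the outermost disk in the face, pushing $S$ past the edge and lowering the weight by two; it needs neither incompressibility, nor irreducibility, nor the sphere-plus-sub-disk-of-$S$ construction you describe (which you also leave unjustified, since that sub-disk need not exist). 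The hypothesis $S\neq S^2$ is not what lets null-homotopic loops bound disks in $S$ (they do on a sphere too); it is needed so that the procedure cannot terminate by isotoping $S$ into the interior of a single tetrahedron, i.e.\ at weight zero, which can happen for spheres but not for an incompressible surface of nonzero genus. Finally, your termination measure is too coarse: the disk swaps that turn non-disk components of $S\cap\Delta$ into disks happen inside a tetrahedron and may change neither the weight nor the number of closed components of $S\cap T^{(2)}$, so you need a finer secondary complexity (for instance the total number of components of $S\cap T^{(2)}$) for the descent argument to close.
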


For $S$ a normal surface, denote by $e(S)$ the \emphdef{edge degree}
of $S$, that is, the number of intersections of $S$ with the
$1$-skeleton of the triangulation $T$. A normal surface is
\emphdef{minimal} if it has minimal edge degree over all the normal
surfaces isotopic to it. Each embedded normal surface has associated
\emphdef{normal coordinates}: a vector in $\mathbb{Z}_{\geq 0}^{7t}$,
where $t$ is the number of tetrahedra in $T$, listing the number of
triangles and quadrilaterals of each type in each tetrahedron. These
coordinates provide an algebraic structure to normal surfaces: there
is a one-to one correspondence between normal surfaces up to normal
isotopy and normal coordinates satisfying some constraints, called the
matching equations and the quadrilateral constraints. In particular, one can add
normal surfaces by adding their normal coordinates, this is called a
\emphdef{Haken sum}. Among normal surfaces, ones of particular
interest are the \emphdef{fundamental} normal surfaces, which are
surfaces that can not be written as a sum of other non-empty normal
surfaces. Every normal surface can be decomposed as a sum of
fundamental normal surfaces, and the following theorem provides tools
to understand these.

\begin{theorem}[{\cite[Corollary~4.1.37]{m-atc3m-03}, see also Jaco and Oertel~\cite{jo-ad3mhm-84}}]\label{T:matveev2}
Let a minimal connected normal surface $S$ in an irreducible $3$-manifold $M$ be presented as a sum $S=\sum_{i=1}^nS_i$ of $n >1$ nonempty normal surfaces. If $S$ is incompressible, so are the $S_i$. Moreover, no $S_i$ is a sphere or a projective plane.
\end{theorem}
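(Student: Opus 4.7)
The plan is to establish both parts of the theorem via a combination of innermost-disk surgeries and the geometry of Haken sums. Recall the geometric picture: $S = \sum_{i=1}^n S_i$ means that, after isotoping the $S_i$ into general position, the double curves $\bigcup_{i<j}(S_i \cap S_j)$ form a $1$-complex $\Gamma \subset M$, and $S$ is obtained from $\bigsqcup_i S_i$ by performing the unique ``regular exchange'' at each component of $\Gamma$ that preserves normality (at each intersection curve there are two ways to switch sheets; one is regular, the other irregular).

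For the incompressibility of each $S_i$, I would argue by contradiction. Suppose $S_i$ admits an embedded compressing disk $D$. Put $D$ in general position with $\bigcup_j S_j$, so $D \cap \bigcup_{j\neq i}S_j$ is a disjoint union of simple closed curves and proper arcs in $\mathrm{int}(D)$. Run a standard innermost-disk reduction: whenever an intersection curve bounds a disk on some $S_j$, surger $D$ along it to reduce intersection complexity; if at some stage the resulting disk gives a compression of $S_j$ we restart with that disk. After finitely many iterations, $\mathrm{int}(D)$ lies in a single component of $M \setminus \bigcup_j S_j$, and $\partial D$ is essential on some $S_k$. The regular exchange structure then shows that $\partial D$ lies on $S$ and remains essential there (otherwise one would find a disk in $S_k$ bounded by $\partial D$, making $D$ non-compressing on $S_k$ in the first place). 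Hence $D$ compresses $S$, contradicting the hypothesis.

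For the no-sphere conclusion, suppose $S_i \cong S^2$. By irreducibility of $M$, the surface $S_i$ bounds a $3$-ball $B \subset M$. At each curve of $S_i \cap S_j \subset \partial B$ there is a choice between the regular and irregular exchange; performing the irregular exchange at \emph{every} double curve incident to $S_i$ yields another normal surface $S'$. The key point is that $S'$ is ambient isotopic to $S$ via an isotopy supported in $B$ (push the relevant sheets of the other $S_j$'s across the ball); but each irregular exchange strips off the contribution of $S_i$ to $e(\,\cdot\,)$, so $e(S') < e(S)$, contradicting minimality of $S$. The no-projective-plane conclusion reduces to the sphere case by doubling: the frontier of a regular neighborhood of $S_i \cong \mathbb{RP}^2$ is a $2$-sphere $\Sigma$ realizing $2 S_i$ in normal coordinates; irreducibility places $\Sigma$ on the boundary of a ball, and the same exchange-and-push argument produces an isotopic normal surface of strictly smaller edge degree.

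The main obstacle I anticipate is the bookkeeping in the exchange argument of the second half: one must verify carefully that (i) the ``irregular'' Haken resolution across $B$ really does yield a surface ambient isotopic to $S$ (this is where irreducibility is essential, as it provides the ball through which to push), and (ii) this isotopy strictly decreases the edge degree, which hinges on the fact that a normal sphere or projective plane must carry a strictly positive number of normal disks and hence contribute positively to $e(S)$. The innermost-disk step in part one is standard, but must be executed so that the final disk, living in a single complementary region, genuinely transfers through the regular exchange to a compression of the sum $S$ rather than of some intermediate surface.
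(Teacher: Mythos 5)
First, a remark on scope: the paper does not prove this statement at all --- it is quoted from Matveev (Corollary~4.1.37) and Jaco--Oertel, so there is no in-paper proof to compare against. Judged on its own, your sketch reaches for the right toolbox (regular versus irregular exchanges, innermost-disk surgeries, irreducibility supplying $3$-balls, the doubling trick for the projective plane), which is indeed the toolbox of the cited proofs, but both halves have genuine gaps. In the incompressibility half, your argument never invokes the minimality of the edge degree of $S$, yet that hypothesis is essential in all known proofs (and is part of the statement): minimality is what powers the outermost-arc/disk-swap steps. Concretely, $\partial D$ lies on $S_i$ and in general crosses the double curves, so $D\cap\bigcup_{j\neq i}S_j$ contains arcs with endpoints on $\partial D$, not just circles; eliminating these arcs requires isotopies that are controlled only because they would otherwise strictly lower the weight of $S$. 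Your closing step, ``the regular exchange structure then shows that $\partial D$ lies on $S$ and remains essential there,'' is precisely the crux of the whole lemma, not a consequence of the setup: since $\partial D$ meets the double curves, it does not sit on $S$ at all until the patch analysis has been carried out.

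The second half rests on a mechanism that is incorrect as stated. All exchanges, regular or irregular, take place in a neighbourhood of the double curves, away from the $1$-skeleton, so \emph{every} resolution of $\bigsqcup_j S_j$ has the same edge degree $\sum_j e(S_j)=e(S)$; an irregular exchange does not ``strip off the contribution of $S_i$,'' and moreover it generally destroys normality, so your $S'$ is not a normal surface and cannot be played off against the minimality of $S$ in any case. (What does have smaller edge degree is the sub-sum $\sum_{j\neq i}S_j$, but there is no a priori reason it is isotopic to $S$.) The standard route is different: the double curves cut $S_i$ into patches, Euler characteristic is additive over these patches, and $\chi(S_i)>0$ for a sphere or projective plane forces some patch of $S_i$ to be a disk (it cannot be all of $S_i$, since $S$ is connected and $n>1$). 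One then invokes the no-disk-patch lemma, whose proof is where irreducibility (capping off and pushing across balls) and minimality (the resulting ambient isotopy of $S$, once re-normalized, strictly lowers the weight) genuinely enter. Your instinct in the final paragraph --- that the whole argument hinges on a weight-reducing isotopy across a ball --- is correct, but the bookkeeping you flag as a worry is exactly where the proposed shortcut fails.
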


\subsection{Crushing}\label{S:crushing}

In order to rely on normal surface theory and apply the aforementioned
theorems, we would like $M$ to be irreducible. Therefore, the first
step of the algorithm is to to simplify the $3$-manifold $M$ so as to
make it irreducible. In order to do this, we rely on the operation of
\emphdef{crushing}, which was introduced by Jaco and
Rubinstein~\cite{jr-0et3m-03}, and extended to the non-orientable case
(as well as simplified) by Burton~\cite{b-nac3mt-14}. In particular,
Burton proves the following theorem~\cite[Algorithm~7]{b-nac3mt-14}.

\begin{theorem}\label{T:burton}
  Given a $3$-manifold $M$, there is an algorithm which either
  decomposes $M$ into a connected sum of prime manifolds, or else
  proves that $M$ contains an embedded two-sided projective plane.

Furthermore, this algorithm is in \NP \, in the following sense: there
exists a certificate of polynomial size (namely, the list of
fundamental normal surfaces along which to crush) allowing to compute
in polynomial time the triangulations of the summands or output that
$M$ contains an embedded two-sided projective plane.
\end{theorem}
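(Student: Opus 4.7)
The plan is to prove Theorem~\ref{T:burton} by iteratively locating an essential $2$-sphere (or a two-sided projective plane in the non-orientable case) as a fundamental normal surface and then crushing along it, showing that the process terminates in polynomially many steps and that each step has a short verifiable certificate. Concretely, the algorithm maintains a collection of triangulated $3$-manifolds, initially just $\{M\}$; at each step it picks a component $M'$ that is not yet certified prime and seeks a fundamental normal $2$-sphere in $M'$, or, failing that, a fundamental two-sided projective plane. If one is found, it crushes along it, replaces $M'$ by the resulting components, and iterates; if neither exists, $M'$ is declared prime.

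First I would establish the existence of the required fundamental surfaces: if $M'$ contains any essential $2$-sphere (respectively any two-sided projective plane), then by a standard least-weight / Haken-sum argument one such surface is fundamental. Namely, take a normal sphere of minimum edge weight; if it is a non-trivial Haken sum, a case analysis on Euler characteristics of summands (sum of $\chi$'s equals $2$, each summand has $\chi \leq 2$, and the quadrilateral constraints prevent certain degenerations) forces one summand to still be a sphere, contradicting minimality. The analogous argument works for two-sided projective planes, with an extra bookkeeping on orientability of the summands. This gives a fundamental normal surface as a certificate whenever $M'$ is not prime, and the Hermite-normal-form bounds on the matching-equation system ensure that any fundamental normal surface has normal coordinates of bit-length polynomial in the number of tetrahedra, so the certificate is short.

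Next I would invoke the crushing theorem of Jaco--Rubinstein~\cite{jr-0et3m-03}, in the streamlined form given by Burton~\cite{b-nac3mt-14}: crushing along a normal $2$-sphere or normal two-sided projective plane $\Sigma$ produces a new triangulation on strictly fewer tetrahedra whose underlying manifold equals, up to removing $3$-balls, the result of cutting $M'$ along $\Sigma$ and capping off the resulting sphere boundaries. In particular, crushing a non-separating essential sphere drops the first Betti number, and crushing a separating one realizes a step of the Kneser prime decomposition. Since the number of tetrahedra strictly decreases and the Kneser--Haken finiteness theorem bounds the number of pairwise-disjoint essential spheres by a linear function of the tetrahedron count, the whole process terminates after polynomially many crushings. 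The verifier, given the list of normal coordinate vectors, checks matching equations and quadrilateral constraints for each (polynomial time), verifies that each surface is topologically a sphere or two-sided projective plane by computing its Euler characteristic and orientation data from the normal coordinates, performs the crushing combinatorially (polynomial time per step by Burton's description), and outputs the resulting triangulated summands.

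The main obstacle, and the technical heart of the argument, is the correctness of the crushing step: one must track what happens to each tetrahedron meeting $\Sigma$, show that the identifications forced by the collapse only introduce controlled topological artifacts (balls, product regions, and in the non-orientable case twisted $I$-bundles over projective planes), and verify that these artifacts are precisely the ones discarded by the bookkeeping. Burton's analysis handles this uniformly by isolating the "bad" building blocks and showing they are recognizable in polynomial time. Once this structural result is in place, combining it with the Kneser--Haken bound and the polynomial-size representation of fundamental normal surfaces yields both the correctness of the algorithm and its membership in \NP, completing the proof.
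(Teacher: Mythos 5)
First, note that the paper does not prove this statement at all: Theorem~\ref{T:burton} is quoted directly from Burton~\cite[Algorithm~7]{b-nac3mt-14}, so your proposal can only be measured against the argument of that cited work (whose relevant features the paper summarizes in Section~\ref{S:crushing}). Measured that way, your reconstruction has a genuine gap at its technical core: you assert that crushing along a normal sphere yields, ``up to removing $3$-balls, the result of cutting $M'$ along $\Sigma$ and capping off the resulting sphere boundaries.'' That is not what the Jaco--Rubinstein/Burton crushing lemma says. Crushing can have further destructive side effects: besides undoing connected sums and deleting balls and spheres, it may silently delete entire prime summands homeomorphic to $\mathbb{R}P^3$, $L(3,1)$, $S^2 \times S^1$ or $S^2 \tilde{\times} S^1$, and this can happen regardless of whether the sphere you crush is essential. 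With your description, the output of the iteration need not be a prime decomposition of $M$ at all, and your correctness argument collapses. Burton's Algorithm~7 deals with exactly this by a final restoration step in which the lost $S^2 \times S^1$, $S^2 \tilde{\times} S^1$ (and lens-space-type) summands are rebuilt by comparing the homology of the input manifold with that of the crushed pieces --- a step the present paper explicitly relies on in Section~\ref{S:crushing} and which is entirely absent from your proposal. Relatedly, your insistence on crushing only \emph{essential} spheres is both unnecessary (Burton crushes arbitrary non-vertex-linking normal spheres, precisely because the side effects are controlled and repairable) and harmful for the \NP\ claim, since essentialness is not something the verifier can check from the certificate.

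A second, smaller but still real, divergence concerns the projective plane. In Burton's algorithm the embedded two-sided projective plane is an \emph{obstruction} that the procedure detects and reports, at which point it stops; it is not a surface along which one crushes. The controlled-side-effect theorem and the strict decrease in the number of tetrahedra are established for normal spheres (and discs in the bounded case), so your step ``failing that, crush along a fundamental two-sided projective plane'' is not covered by the cited machinery and would need a new analysis you do not supply. Finally, two of your supporting claims are under-argued: the least-weight Haken-sum argument does not force a sphere summand (a decomposition into two projective planes is consistent with $\chi=2$, and the minimality contradiction needs weight additivity plus non-triviality of the sphere summand), and the verifier's check that a certificate vector represents a \emph{connected} sphere or two-sided projective plane is nontrivial, since the surface may have exponentially many normal disks; Euler characteristic alone, being linear in the coordinates, does not certify connectedness.
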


If this algorithm outputs an embedded projective plane, we are done, since in this case our $3$-manifold $M$ contains every non-orientable surface of odd genus. If not, if we are provided the aforementioned certificate we can proceed separately on every summand, thanks to the following easy lemma.

\begin{lemma}\label{L:connecsum}
Let $M$ be a connected sum of $3$-manifolds $M_1, \ldots M_k$. Then if a non-orientable surface $S$ of odd genus $g$ embeds into $M$, it also embeds into one of the $M_i$.
\end{lemma}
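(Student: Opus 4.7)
The plan is to proceed by induction on the number of summands $k$, so it suffices to treat $M = M_1 \# M_2$, joined across a separating sphere $\Sigma \subset M$. Let $\mathcal{F}$ denote the set of embeddings into $M$ of non-orientable surfaces of odd Euler genus at most $g$; by hypothesis, $\mathcal{F} \neq \emptyset$ since $S \in \mathcal{F}$. Among all $\Sigma$-transverse representatives in $\mathcal{F}$, pick an $S^{*}$ minimizing $|S^{*} \cap \Sigma|$. If this minimum is zero, then $S^{*}$ lies entirely in one side of $\Sigma$, hence in $M_1$ or $M_2$; combined with the handle-addition observation at the start of Section~\ref{S:normal}, this promotes the result to an embedding of Euler genus exactly $g$.

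The rest of the argument is a standard innermost-disk contradiction showing $|S^{*} \cap \Sigma| = 0$. Assuming the contrary, pick a circle $\gamma \in S^{*} \cap \Sigma$ that is innermost on $\Sigma$, bounding a disk $D \subset \Sigma$ with $D \cap S^{*} = \gamma$. The usual disk-swap (replacing any disk $D' \subset S^{*}$ bounded by $\gamma$ with a pushoff of $D$ to the opposite side of $\Sigma$, followed by a small isotopy to remove the resulting tangential intersection at $\gamma$) produces a homeomorphic surface with strictly fewer $\Sigma$-intersections, so minimality forces $\gamma$ to be essential on $S^{*}$.

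The main obstacle will be ruling out that $\gamma$ is one-sided on $S^{*}$; if it were, the naive compression along $D$ would drop the Euler genus by one, escaping the odd regime and the class $\mathcal{F}$. The plan for handling this is a global-separation argument: take a Möbius band neighborhood $\mu$ of $\gamma$ in $S^{*}$ small enough that $\mu \cap \Sigma = \gamma$. Then $\mu \setminus \gamma$ is a connected annulus, disjoint from $\Sigma$, so it must lie in a single component of $M \setminus \Sigma$. On the other hand, transversality of $S^{*}$ with $\Sigma$ places the two local half-strips of $\mu \setminus \gamma$ near any point of $\gamma$ on opposite sides of $\Sigma$; since $\Sigma$ separates $M$ globally, this forces $\mu \setminus \gamma$ to contain points in both components of $M \setminus \Sigma$, a contradiction. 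Hence $\gamma$ is essential and two-sided on $S^{*}$.

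At this point, the standard compression of $S^{*}$ along $D$ produces an embedded surface $S^{\sharp} \subset M$ with $\chi(S^{\sharp}) = \chi(S^{*}) + 2$ and $|S^{\sharp} \cap \Sigma| = |S^{*} \cap \Sigma| - 1$ (the intersection circle $\gamma$ disappears, and the two parallel capping disks are pushoffs of $D$ to opposite sides of $\Sigma$, so they carry no intersection). The total Euler genus drops by $2$ and therefore remains odd, so---whether $S^{\sharp}$ is connected or splits into two components---at least one of its components has odd Euler genus, is non-orientable, still lies in $\mathcal{F}$, and has strictly fewer $\Sigma$-intersections than $S^{*}$. This contradicts the minimality of $S^{*}$ and finishes the proof.
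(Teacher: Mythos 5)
Your proof is correct, but it follows a genuinely different route from the paper's. The paper does the cut-and-paste in one blow: it surgers $S$ along all of its intersection circles with the connect-sum spheres, obtaining closed surfaces $S_i\subset M_i$ of which $S$ is the connected sum; additivity of the Euler characteristic plus the parity of $g$ then produce an odd-genus (hence non-orientable) piece, and orientable handles are re-added to recover genus exactly $g$. You never decompose $S$ globally: you fix a single separating sphere $\Sigma$, minimize $|S^*\cap\Sigma|$ over embedded non-orientable surfaces of odd genus at most $g$, and remove intersection circles one at a time by an innermost-disk argument, with the same parity bookkeeping keeping you inside the class, an induction on the number of summands, and the same handle re-addition at the end. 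What your version buys is explicitness on two points the paper's short argument leaves implicit: that the swap/compression keeps the surface embedded, and that every intersection circle is two-sided in $S^*$ -- your M\"obius-band-versus-separation argument is correct, and some such fact is also silently used in the paper's count, since surgery along a one-sided circle would change $\chi$ by $1$ and spoil the parity. The cost is length and the extra induction. One cosmetic corner case: in the connected outcome of your final compression the genus drops to $g'-2$, which only makes sense for $g'\geq 3$; this is harmless because a projective plane carries no essential two-sided simple closed curve, so that case never arises, but a sentence saying so would tidy the argument.
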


\begin{proof}
The $3$-manifolds $M_i$ are obtained from $M$ after a cut-and-paste procedure along $2$-spheres, and let $S_i$ denote the surface $S$ obtained in $M_i$ after this procedure. The surface $S$ is the connected sum of the surfaces $S_i$, and since $g$ is odd and the Euler characteristic is additive under connected sums, one of the $S_i$ has odd Euler characteristic. In particular it is non-orientable. By adding orientable handles to this $S_i$ inside $M_i$, one obtains a non-orientable surface of the same genus as $S$, and therefore an embedding of $S$.
\end{proof}

If one of the summands is prime but not irreducible, then, as mentioned before, it is homeomorphic either to $S^2 \times S^1$ or the twisted bundle $S^2 \tilde{\times} S^1$. One of the features~\cite[Algorithm 7]{b-nac3mt-14} of the crushing algorithm that we use is that the $S^2 \times S^1$ and $S^2 \tilde{\times} S^1$ summands in the prime decomposition are actually rebuilt afterwards based on the homology of the input $3$-manifold. In particular, we know precisely if there are any and how many of them there are, without having to use some hypothetical recognition algorithm. Furthermore, the following lemma shows that these summands are uninteresting for our purpose.

\begin{lemma}
No non-orientable surface of odd genus embeds into $S^2 \times S^1$ or $S^2 \tilde{\times} S^1$.
\end{lemma}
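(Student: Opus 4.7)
The plan is to derive a contradiction via a direct $H_1$ computation, using Lemma~\ref{L:hempel} as the main ingredient. I would suppose for contradiction that a non-orientable surface $S$ of odd Euler genus $g$ admits an embedding $h : S \hookrightarrow M$ with $M \in \{S^2 \times S^1,\, S^2 \tilde{\times} S^1\}$, and pick an embedded simple closed curve $\alpha \subset S$ representing the unique element of order $2$ in $H_1(S;\mathbb{Z}) \cong \mathbb{Z}^{g-1} \oplus \mathbb{Z}/2\mathbb{Z}$. Such an $\alpha$ always exists (e.g.\ the core of any embedded M\"obius band in $S$) since $g$ is odd. Lemma~\ref{L:hempel} then gives that $h_\#[\alpha]$ is nonzero in $H_1(M;\mathbb{Z})$, and since $2[\alpha]=0$ in $H_1(S;\mathbb{Z})$, functoriality of $h_\#$ forces $h_\#[\alpha]$ to have order exactly $2$ in $H_1(M;\mathbb{Z})$.

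Second, I would show that $H_1(M;\mathbb{Z}) \cong \mathbb{Z}$ for both target manifolds, which is torsion-free and hence cannot contain an element of order $2$. Both $S^2 \times S^1$ and $S^2 \tilde{\times} S^1$ are $S^2$-bundles over $S^1$, so the long exact sequence of homotopy groups of the fibration collapses (since $\pi_1(S^2)=\pi_2(S^1)=0$) and yields $\pi_1(M) \cong \mathbb{Z}$, whence $H_1(M;\mathbb{Z}) \cong \mathbb{Z}$ by abelianization. Alternatively, a direct Mayer--Vietoris computation on the decomposition of $M$ into two copies of $S^2 \times I$ glued along two copies of $S^2$ (with the antipodal map as monodromy in the twisted case) produces the same result. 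This contradicts the conclusion of the previous paragraph and completes the proof.

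The only point requiring a sanity check is the torsion-freeness of $H_1(S^2 \tilde{\times} S^1;\mathbb{Z})$: one might initially worry that the non-orientability of the total space introduces $2$-torsion. However, the twist is confined to the $S^2$-fibre and the induced monodromy on $H_0(S^2) = \mathbb{Z}$ is trivial, so no torsion propagates into degree $1$. Once this is verified, the rest of the argument is completely routine and entirely homological, so no further technical obstacle is anticipated.
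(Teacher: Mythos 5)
Your argument is correct and is essentially the paper's own proof: apply Lemma~\ref{L:hempel} to get an element of order $2$ in $H_1(M;\mathbb{Z})$ and contradict the fact that $H_1(S^2 \times S^1) \cong H_1(S^2 \tilde{\times} S^1) \cong \mathbb{Z}$ is torsion-free (the paper simply asserts this computation, which your fibration/Mayer--Vietoris argument fills in). One small inaccuracy in a side remark: the core of an arbitrary embedded M\"obius band need not represent the order-$2$ class (a single cross-cap generator in genus $3$ has infinite order in $H_1$); what matters is that some simple closed curve realizes the torsion class, which is true, so the proof stands.
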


\begin{proof}
By Lemma~\ref{L:hempel}, if such an embedding existed, there would be an element of order $2$ in $H_1(S^2 \times S^1)$ or $H_1(S^2 \tilde{\times} S^1)$, which is a contradiction since both of these groups are equal to $\mathbb{Z}$. 
\end{proof}

Therefore, the output of our algorithm is trivial for these summands, and in the rest of this section we assume that the manifold $M$ is irreducible.

\subsection{Fundamental normal surfaces}

We now show that in order to find the non-orientable surface of minimal odd genus, it is enough to look at the fundamental normal surfaces.

\begin{proposition}\label{P:fundamental}
If a non-orientable surface of minimal odd genus embeds in an irrreducible
$3$-manifold $M$, then it is witnessed by one of the fundamental
normal surfaces. If none of the fundamental normal surfaces have odd
genus, then no surface of odd genus embeds into $M$.
\end{proposition}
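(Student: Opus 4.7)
The plan is to take a non-orientable surface $S$ of minimum odd Euler genus $g$ embedded in $M$ and reduce, by compression followed by normalization and Haken decomposition, to the case where $S$ itself is a fundamental normal surface; the second statement will then follow by contraposition from the first.

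I would first argue that $S$ may be chosen incompressible. If $S$ admits a compressing disk, compression produces a surface $S'$ with $\chi(S') = \chi(S) + 2$, which is still odd. If $S'$ is connected then it is a non-orientable surface of odd Euler genus $g-2$, contradicting the minimality of $g$. Otherwise $S' = A \sqcup B$, and parity forces exactly one of $\chi(A), \chi(B)$ to be odd, say $\chi(A)$; then $A$ is non-orientable of odd genus $g_A = g + \chi(B) - 2 \leq g$, and minimality forces $\chi(B) = 2$, so $B$ is a 2-sphere, which bounds a ball by irreducibility of $M$ and can be discarded. Replacing $S$ by $A$ and iterating, with the edge weight relative to a fixed triangulation of $M$ as a strictly decreasing monovariant, terminates at an incompressible surface of Euler genus $g$. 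By Theorem~\ref{T:matveev1}, $S$ is then ambient isotopic to a normal surface, which I further take to be minimal in its normal isotopy class.

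Next, decompose $S = \sum_{i=1}^n S_i$ as a Haken sum of fundamental normal surfaces (such a decomposition exists since every non-negative integer solution to the matching equations is a non-negative integer combination of fundamental ones, and each summand automatically inherits the quadrilateral constraint from $S$). Each $S_i$ is connected, for a disconnected normal surface is the Haken sum of its components and so cannot be fundamental. If $n = 1$, then $S = S_1$ is itself a fundamental normal surface of Euler genus $g$ and the first assertion holds. For $n > 1$, Theorem~\ref{T:matveev2} yields that each $S_i$ is incompressible and is neither a sphere nor a projective plane, so $\chi(S_i) \leq 0$. Since $\chi(S) = \sum_i \chi(S_i)$ is odd, some $S_{i_0}$ has odd Euler characteristic and is therefore non-orientable of odd Euler genus $g_{i_0}$; from $\chi(S_{i_0}) = \chi(S) - \sum_{i \neq i_0} \chi(S_i) \geq \chi(S)$ we obtain $g_{i_0} \leq g$, and minimality of $g$ forces $g_{i_0} = g$. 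Hence $S_{i_0}$ is a fundamental normal surface of Euler genus exactly $g$. The second statement is then immediate: if any non-orientable surface of odd Euler genus embedded in $M$, the argument above applied to the minimum such genus would produce a fundamental normal surface of odd genus, contradicting the hypothesis.

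The most delicate step is the initial reduction to the incompressible case while preserving the genus: when compression merely peels off a trivial 2-sphere the genus is unchanged, and termination of the iteration must be justified, which I would handle via the edge-weight monovariant (or alternatively by minimising edge weight over all normal surfaces representing a non-orientable class of genus $g$). The remainder of the argument is a fairly mechanical combination of Matveev's two normal-surface theorems with the elementary parity analysis of Euler characteristics.
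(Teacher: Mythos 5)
Your argument follows essentially the same route as the paper: incompressibility of a minimal odd-genus surface, normalization via Theorem~\ref{T:matveev1}, a minimal-edge-weight normal representative, then the Haken-sum decomposition combined with Theorem~\ref{T:matveev2} and additivity/parity of the Euler characteristic. The one structural difference is at the end: the paper takes $S'$ of minimal edge weight among \emph{all} genus-$g$ normal surfaces and derives a contradiction (the odd-genus summand would have strictly smaller edge weight), concluding $S'$ itself is fundamental, whereas you directly exhibit the fundamental summand $S_{i_0}$ of genus $g$ as the witness; both work, and your variant only needs minimality within the isotopy class, which is what Theorem~\ref{T:matveev2} requires. Be careful with the phrase ``minimal in its normal isotopy class'': edge weight is constant under normal isotopy, so that condition is vacuous; what you must take is minimal edge weight among all normal surfaces \emph{ambient} isotopic to $S$.

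On the step you flag as delicate: the sphere-peeling iteration cannot actually occur, and its termination argument is not sound as written. If compression along $D$ separates $S$ and one capped-off piece $B$ were a sphere, then $B$ minus the cap would be a disk in $S$ bounded by $\partial D$, contradicting that $\partial D$ is non-contractible in $S$. Hence in the separating case both pieces have Euler genus at least $1$, their Euler genera sum to $g$, and the piece of odd genus has genus strictly less than $g$ --- an immediate contradiction with minimality, exactly parallel to the non-separating case. This matters because your proposed monovariant does not do the job: at this stage $S$ is just an embedded surface, compression is a topological operation, and there is no reason the edge weight (not even an isotopy invariant for non-normal surfaces) strictly decreases under it; moreover $\chi$ is not a monovariant either, since compressing and then discarding a sphere leaves it unchanged. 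So drop the iteration: minimality of $g$ gives incompressibility in one step, which is what the paper does (the paper in fact glosses over the separating case, and your parity computation handles it correctly once combined with the observation that neither piece can be a sphere).
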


Before proving this proposition, let us show how it implies Theorem~\ref{T:normal}.

\begin{proof}[Proof of Theorem~\ref{T:normal}]
  By applying the crushing procedure and following Lemma~\ref{L:connecsum} and the discussion in Section~\ref{S:crushing}, one can assume that $M$ is irreducible if one is given the certificate of Theorem~\ref{T:burton}. Then, by Proposition~\ref{P:fundamental}, the non-orientable surface of minimal odd genus, if it exists, appears among one of the fundamental normal surfaces. By a now standard argument of Hass, Lagarias and Pippenger~\cite[Lemma 6.1]{hlp-ccklp-99}, the coordinates of fundamental normal surfaces can be described with a polynomial number of bits. Since there are $7t$ coordinates for a triangulation of size $T$, we can therefore use this as a second half of the \NP \, certificate. Now, if the input genus $g$ is at least the minimal one witnessed by this certificate, then the non-orientable surface of genus $g$ is embeddable in $M$, otherwise it is not.
\end{proof}

We now prove Proposition~\ref{P:fundamental}.

\begin{proof}[Proof of Proposition~\ref{P:fundamental}]
  Let $S$ be a surface of minimal odd genus $g$ embedded in $M$. We first claim that $S$ is incompressible. Indeed, if it is not, let $D$ be a compressing disk and $S \mid D$ be the surface obtained after the compression along $D$: then $S \mid D$ has genus $g-2$ which contradicts the minimality of $g$.

  The surface $S$ being incompressible, then by Theorem~\ref{T:matveev1}, there exists a normal surface isotopic to it. Let us denote by $S'$ a normal surface of genus $g$ and of minimal edge degree among all of those. If $S'$ is not fundamental, by Theorem~\ref{T:matveev2}, then it can be written as a sum of fundamental normal surfaces $S'=\sum_{i=1}^n S_i$ such that the $S_i$ are incompressible and none of them are spheres or projective planes. In particular, none of the surfaces $S_i$ have positive Euler characteristic. Since the Euler characteristic is additive on the space of normal coordinates, one of the surfaces $S_i$ has odd genus at most $g$. By minimality of $g$, this surface $S_i$ actually has genus $g$, and it has smaller edge degree by $S'$, which is a contradiction. Therefore $S'$ is fundamental, which concludes the proof.

\end{proof}

\textbf{Remark:} The reason why the above proof fails in the case of even genus is that in general a non-orientable surface of genus $g$ might be written as a Haken sum of orientable surfaces. In our case, this issue is avoided by the fact that a surface of odd Euler genus is necessarily non-orientable. For even Euler genus, the first problem that we do not solve is the one of deciding whether a given $3$-manifold contains a Klein bottle. For this specific case, we believe that the problem should be decidable, by computing a JSJ decomposition and identifying in the geometric pieces which ones contain Klein bottles: hyperbolic pieces do not, and one can detect which Seifert fibered spaces do just based on their invariants. However, this technique does not seem to apply to higher genera.

\subparagraph*{Acknowledgements}
We would like to thank Saul Schleimer and Eric Sedgwick for stimulating discussions, and the anonymous reviewers for helpful comments.


\bibliographystyle{plain}
\bibliography{bibexport,biblio}

\end{document}